\newtheorem{theorem}{Theorem}[section]
\newtheorem{lemma}[theorem]{Lemma}
\newtheorem{remark}[theorem]{Remark}
\newtheorem{corollary}[theorem]{Corollary}
\newtheorem{proposition}[theorem]{Proposition}
\newcommand{\squig}{\rightsquigarrow}
\long\def\alert#1{\smallskip{\hskip\parindent\vrule%
\vbox{\advance\hsize-2\parindent\hrule\smallskip\parindent.4\parindent%
\narrower\noindent#1\smallskip\hrule}\vrule\hfill}\smallskip}
\newcommand{\Val}{{\rm Val}}
\begin{document}
\title[On Normal-Valued Basic Pseudo Hoops]{On Normal-Valued  Basic Pseudo Hoops}
\author[M. Botur, A. Dvure\v{c}enskij,  and T. Kowalski]{Michal Botur$^1$, Anatolij Dvure\v{c}enskij$^2$,  and Tomasz Kowalski$^3$}
\date{}
\maketitle

\begin{center}  \footnote{Keywords:
Pseudo MV-algebra, pseudo BL-algebra, pseudo hoop, filter, Holland's
Representation Theorem, normal-valued pseudo hoop, base of
equations.

AMS classification:  06D35, 03G12,
03B50, 81P10.

MB thanks for the support by SAIA, Slovakia, and by MSM 6198959214 of the RDC of the Czech Government, and by GA\v{C}R P201/11/P346, Czech Republic,   AD thanks  for the
support by Center of Excellence SAS -~Quantum Technologies~-,  ERDF
OP R\&D Projects CE QUTE ITMS 26240120009 and meta-QUTE ITMS
26240120022, the grant VEGA No. 2/0032/09 SAV. }
\small{Department of Algebra and Geometry\\  Faculty of Natural Sciences, Palack\'y University\\ 17. listopadu 12, CZ-771 46 Olomouc, Czech Republik\\
$^2$ Mathematical Institute,  Slovak Academy of Sciences\\
\v Stef\'anikova 49, SK-814 73 Bratislava, Slovakia\\
$^3$ Department of Mathematics and Statistics\\
University of Melbourne\\ Parkville, VIC 3010, Australia\\
E-mail: {\tt botur@inf.upol.cz},\ {\tt dvurecen@mat.savba.sk},  \\ {\tt kowatomasz@gmail.com}}
\end{center}

\begin{abstract}  We show that every pseudo hoop satisfies the Riesz
Decomposition Property. We visualize basic pseudo hoops by functions
on a linearly ordered set.  Finally, we study normal-valued basic
pseudo hoops giving a countable base of equations for them.
\end{abstract}

\section{Introduction}

The Romanian algebraic school during the last decade contributed a
lot to  noncommutative generalizations of many-valued reasoning
which generalizes MV-algebras by C.C. Chang \cite{Cha}. They
introduced pseudo MV-algebras, \cite{GeIo} (independently introduced
also in \cite{Rac} as generalized MV-algebras), pseudo BL-algebras,
\cite{DGI1, DGI2}, pseudo hoops, \cite{GLP}. We recall that pseudo
BL-algebras are also a noncommutative generalization of P. H\'ajek's
BL-algebras: a variety that is an algebraic counterpart of fuzzy
logic, \cite{Haj}.

However, as it was recently recognized,  many of these notions have
a very close connections with notions introduced already by B.
Bosbach in his pioneering papers on various classes of semigroups:
among others he introduced complementary semigroups (today known as
pseudo-hoops). A deep investigation of these structures can be found
in his papers \cite{Bos1, Bos2}; more information are available in
his recent papers \cite{Bos3,Bos4}. Nowadays, all these structures
can be also studied under one common roof, as residuated lattices,
\cite{GaTs}.

Now all these structures are  intensively studied by many experts.
Very important results were presented in \cite{JiMo}. In the paper
\cite{Dvu4}, it was proved that every linearly ordered pseudo hoop
is an ordinal sum of negative cones or intervals of lattice-ordered
groups, see also \cite{AgMo}. The paper \cite{DGK} introduced
interesting classes of pseudo hoops, like systems $\mathcal{MPH}$
and $\mathcal{MPH}_b$  of all pseudo hoops  (bounded pseudo hoops)
$M$ such that every maximal filter of $M$ is normal, and the system
$\mathcal{NVPH}$ of normal-valued basic pseudo-hoops $M$ such that
every value in $M$ is normal in its cover. The latter one is
inspired by analogous notions from theory of $\ell$-groups. In
\cite{DGK}, there was proved that $\mathcal{NVPH} \subset
\mathcal{MPH},$ $\mathcal{MPH}_b \subset \mathcal{MPH}$ and
$\mathcal{NVPH},$ $\mathcal{MPH}_b$ are varieties but
$\mathcal{MPH}$ is not a variety, \cite[Rem 4.2]{DGK}.

The main aim is to continue in the study of pseudo hoops, focusing
on  normal-valued ones. We present an equational basis  of
normal-valued basic pseudo hoops. In addition, we show that every
pseudo hoop satisfies the Riesz Decomposition Property (RDP) and we
present also a  Holland's type representation of basic pseudo hoops.

The paper is organized as follows.  Section 2 gathers the basic
notions and properties of pseudo hoops and Section 3 deals with
basic pseudo hoops. Section 4 proves the Riesz Decomposition
Property for pseudo hoops, and presents some results on filters.
Some kind of the Holland Representation Theorem for basic pseudo
hoops which enables us to visualize them by functions on a linearly
ordered set is presented in Section 5. Finally, Section 6 studies
normal-valued basic pseudo hoops and presents a countable base of
equations characterizing them. In addition two open questions are
formulated.

\section{Basic Facts and Properties} 

We recall that according to \cite{GLP}, a \textit{pseudo hoop} is
an algebra $(M; \odot, \to,\squig,1)$ of type $\langle 2,2,2,0
\rangle$ such that, for all $x,y,z \in M,$

\begin{enumerate}

\item[{\rm (i)}]   $x\odot 1 = x = 1 \odot x;$

 \item[{\rm (ii)}] $x\to x = 1 = x\squig x;$

\item[{\rm (iii)}] $(x\odot y) \to z = x \to (y\to z);$

 \item[{\rm (iv)}] $(x \odot y) \squig z = y \squig
(x\squig z);$

 \item[{\rm (v)}] $(x\to y) \odot x= (y\to x)\odot y =
x\odot (x\squig y) = y \odot (y \squig x).$

\end{enumerate}

We recall that  $\odot$ have higher priority than $\to$ or $\squig$,
and those higher than $\wedge$ and $\vee,$ and $\wedge$ is higher
than $\vee.$

If $\odot$ is commutative (equivalently $ \to = \squig$), $M$ is
said to be a \textit{hoop}.  If we set $x \le y$ iff $x \to y=1$
(this is equivalent to $x \squig y =1$), then $\le$ is a partial
order such that $x\wedge y = (x\to y)\odot x$ and $M$ is a
$\wedge$-semilattice.

We say that a pseudo hoop $M$

\begin{enumerate}

\item[(i)] is  {\it bounded} if there is a least element $0,$
otherwise, $M$ is {\it unbounded},

\item[(ii)] satisfies \textit{prelinearity} if, given $x,y \in M,$
$(x\to y)\vee (y\to x)$ and $(x\squig y)\vee (y\squig x)$ are
defined in $M$ and they are equal $1,$

\item[(iii)] is \textit{cancellative} if $x\odot y=x\odot z$ and $s\odot x= t
\odot x$ imply $y= z$ and $s= t,$

\item[(iv)] is a \textit{pseudo BL-algebra}  if $M$ is a bounded
lattice satisfying  prelinearity.
\end{enumerate}

For a pseudo BL-algebra, we define $x^-=x\to 0$ and $x^\sim =
x\squig 0.$ A pseudo BL-algebra is said to be a pseudo MV-algebra if
$x^{-\sim}=x=x^{\sim -}$ for every $x \in M.$

From (v) of the definition of pseudo hoops we have that a pseudo
hoop is cancellative iff $x\odot y\le x\odot z$ and $s\odot x\le t
\odot x$ imply $y\le z$ and $s\le t.$

Many examples of pseudo hoops can be made from $\ell$-groups. Now
let $G$ be an $\ell$-group (written multiplicatively and with a
neutral element $e$). On the negative cone $G^-=\{g\in G:\ g\le e\}$
we define: $x\odot y :=xy,$ $x\to y := (yx^{-1})\wedge e,$ $x\squig
y :=(x^{-1}y)\wedge e,$ for $x,y \in G^-.$ Then
$(G^-;\odot,\to,\squig,e)$ is an unbounded (whenever $G\ne \{e\}$)
cancellative pseudo hoop. Conversely, according to \cite[Prop
5.7]{GLP}, every cancellative pseudo hoop is isomorphic to some
$(G^-;\odot,\to,\squig,e).$

If $u\ge e$ is a strong unit unit (= order unit) in $G$, we define
on $[-u,e]$ operations $x\odot y :=(xy)\vee(-u),$ $x\to y :=
(yx^{-1})\wedge e,$ $x\squig y :=(x^{-1}y)\wedge e,$ for $x,y \in
[-u,e].$ Then $([-u,e];\odot,\to, \squig, -u,e)$ is a bounded pseudo
hoop (= pseudo MV-algebra). By \cite{Dvu1}, every pseudo MV-algebra
is of the form $([-u,e];\odot,\to, \squig, -u,e).$

For any $x \in M$ and any integer $n\ge 0$ we define $x^n$
inductively: $x^0 := 1$ and $x^n := x^{n-1}\odot x$ for $n \ge 1.$

A subset $F$ of a pseudo hoop is said to be a {\it filter} if (i)
$x,y \in F$ implies $x\odot y \in F,$ and (ii) $x\le y$ and $x \in
F$ imply $y \in F.$  We denote by $\mathcal F(M)$ the set of all
filters of $M.$  According to  \cite[Prop 3.1]{GLP},  a subset $F$
is a filter iff (i) $1 \in F$, and (ii) $x, x\to y \in F$ implies
$y\in F$ ($x, x\squig y \in F$ implies $y\in F$), i.e., $F$ is a
{\it deductive system}.  If $a\in M,$ then the filter, $F(a),$
generated by $a$ is the set
$$
F(a)=\{x\in M: x\ge a^n\ \mbox{for some}\ n \ge 1\}.
$$

A filter $F$ is normal if $x\to y \in F$ iff $x\squig y \in F$. This
is equivalent $a\odot F = F\odot a$ for any $a \in M$;  here $a\odot
F = \{a\odot h:\ h \in F\}$ and $F\odot a = \{h\odot a:\ h \in F\}.$
If $F$ is a normal filter, we define $x\theta_F y $ iff $x\to y \in
F$ and $y\to x\in F$, then $\theta_F$ is a congruence on $M$,
\cite[Prop 3.13]{GLP}, and $M/F =\{x/\theta_F:\ x \in M\} $ is again
a pseudo hoop, where $x/\theta_F$ is an equivalence class
corresponding to the element $x \in M,$ we write also $x/F
=x/\theta_F.$ Moreover, there is a one-to-one correspondence,
\cite[Prop 3.15]{GLP}, among the set of normal filters, $F,$ and the
set of congruences.

We recall that a filter $F$ of a pseudo hoop $M$ is called  {\it
maximal} if it is a proper subset of $M$ and not  properly contained
in any proper filter of $M$. We recall that if $M$ is not bounded,
then it can happen that $M$ has no maximal filter; for example this
is true for the real interval $(0,1]$ equipped with $s\odot t =
\min\{s,t\}$, and $s\to t = 1$ iff $s\le t$, otherwise $s\to t = t$
$(s,t \in (0,1])$. In \cite{Dvu3}, it was proved that every linear
pseudo BL-algebra admits a unique maximal filter, and this filter is
normal.

\section{Basic Pseudo Hoops}

A pseudo hoop $M$ is said to be {\it basic} if, for all $x,y,z \in
M,$

\begin{enumerate}

\item[{\rm (B1)}] $(x\to y) \to z \le ((y\to x)\to z)\to z$;

 \item[{\rm (B2)}] $(x\squig y) \squig z \le ((y\squig
x)\squig z)\squig z$.

\end{enumerate}

It is straightforward to verify that any linearly ordered pseudo
hoop and hence any representable pseudo hoop (= a subdirect product
of linearly ordered pseudo hoops)  is basic.

By  \cite[Prop 4.6]{GLP},  every basic pseudo hoop is a distributive
lattice. By  \cite[Prop 4.6]{GLP}, $M$ is a distributive lattice
with prelinearity.

We note, see \cite[Lem 2.6]{GLP}, that if $\bigvee_i b_i$ exists,
then so do $ \bigvee_i (a\odot  b_i)$ and $ \bigvee_i (b_i\odot a),$
moreover, $a\odot (\bigvee_i b_i) = \bigvee_i (a\odot b_i)$ and
$(\bigvee_i b_i)\odot a = \bigvee_i (b_i \odot a).$

\begin{proposition}\label{pr:2.1}
If a pseudo hoop $M$ satisfies prelinearity, then $\odot$
distributes $\wedge$ from both sides, i.e. for all $x,y,z \in M,$ we
have

\begin{enumerate}

\item[{\rm (i)}] $z\odot (x\wedge y) = (z\odot x)\wedge (z\odot y),$

\item[{\rm (ii)}] $(x\wedge y)\odot z = (z\odot
z)\wedge (y\odot z).$
\end{enumerate}

\end{proposition}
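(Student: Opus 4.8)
The plan is to prove (i), since (ii) follows by the left–right symmetric argument (using $\squig$ in place of $\to$, and the mirrored prelinearity axiom). Recall from the excerpt that in any pseudo hoop $x \wedge y = (x\to y)\odot x = (y \to x)\odot y$, that $\odot$ preserves order, and that prelinearity gives $(x\to y)\vee(y\to x)=1$. The one inequality is free: from $x\wedge y \le x$ and $x\wedge y \le y$ we get $z\odot(x\wedge y) \le (z\odot x)\wedge(z\odot y)$ by monotonicity of $\odot$ in the right argument. So the whole content is the reverse inequality $(z\odot x)\wedge(z\odot y)\le z\odot(x\wedge y)$.

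For the reverse inequality I would exploit the distributivity of $\odot$ over joins (recorded just before the proposition: if $\bigvee_i b_i$ exists then $a\odot\bigvee_i b_i = \bigvee_i(a\odot b_i)$, and similarly on the other side). First write $z\odot x = z\odot x \odot 1 = z\odot x\odot\big((x\to y)\vee(y\to x)\big)$, and distribute $\odot(x\to y)\vee(x\to x)$... more carefully: apply the join-distributivity to expand $z\odot\big(x\odot((x\to y)\vee(y\to x))\big)$, getting $z\odot\big((x\odot(x\to y))\vee(x\odot(y\to x))\big) = \big(z\odot(x\odot(x\to y))\big)\vee\big(z\odot(x\odot(y\to x))\big)$. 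Now $x\odot(x\to y)$ — here I must be careful about which of the four forms of $x\wedge y$ appears: axiom (v) gives $x\wedge y = (x\to y)\odot x = (y\to x)\odot y$, which are the forms with the $\odot$ on the \emph{right}. To get the form I want I should instead start from the other side: write $x = 1\odot x = \big((x\to y)\vee(y\to x)\big)\odot x$ and distribute to obtain $x = \big((x\to y)\odot x\big)\vee\big((y\to x)\odot x\big) = (x\wedge y)\vee\big((y\to x)\odot x\big)$. Similarly $y = (x\wedge y)\vee\big((x\to y)\odot y\big)$.

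Now compute $z\odot x = z\odot\big((x\wedge y)\vee((y\to x)\odot x)\big) = \big(z\odot(x\wedge y)\big)\vee\big(z\odot(y\to x)\odot x\big)$, and likewise $z\odot y = \big(z\odot(x\wedge y)\big)\vee\big(z\odot(x\to y)\odot y\big)$. Meeting these two and using that $M$ is a distributive lattice (every basic pseudo hoop, hence every prelinear one, is distributive — cited from \cite[Prop 4.6]{GLP}), the meet of the two joins expands into four terms; three of them lie below $z\odot(x\wedge y)$, so it remains to bound the crossed term $\big(z\odot(y\to x)\odot x\big)\wedge\big(z\odot(x\to y)\odot y\big)$. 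Here I would use $(y\to x)\odot x \le x\wedge y$? — no, $(y\to x)\odot x$ need not be below $y$ in general; instead I bound the crossed term by noting $z\odot(y\to x)\odot x \le z\odot x$ and $\le z\odot (y\to x)$, and symmetrically, then push $(x\to y)$ and $(y\to x)$ back together. Concretely: $z\odot(y\to x)\odot x \le z\odot(y\to x)$ and $z\odot(x\to y)\odot y\le z\odot(x\to y)$ give crossed term $\le \big(z\odot(y\to x)\big)\wedge\big(z\odot(x\to y)\big)$; applying the \emph{already-known} easy half of (i) in reverse this is $\ge z\odot\big((y\to x)\wedge(x\to y)\big)$ — wrong direction. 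So instead: crossed term $\le z\odot\big((y\to x)\odot x\big)\wedge z\odot\big((x\to y)\odot y\big) = z\odot(x\wedge y)\wedge z\odot(x\wedge y) = z\odot(x\wedge y)$, using $(y\to x)\odot x = x\wedge y = (x\to y)\odot y$ directly from axiom (v)! That is the clean route: each crossed/mixed factor already contains a copy of $x\wedge y$.

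The main obstacle, and the step I would be most careful about, is exactly this bookkeeping with the four expressions for $x\wedge y$ in axiom (v): one must route the argument so that each term produced by distributing over $\vee$ either is literally $z\odot(x\wedge y)$ via (v), or is manifestly $\le z\odot x$ and $\le z\odot y$ in a way that collapses — the non-commutativity means $x\to y$ and $y\to x$ behave asymmetrically, so the "distribute $1 = (x\to y)\vee(y\to x)$" trick must be applied on the correct side each time. Once the four-term expansion is organized correctly, distributivity of the lattice and monotonicity of $\odot$ finish it with no further ideas needed; (ii) is then the mirror image.
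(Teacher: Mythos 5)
Your argument breaks down at its crucial last step. You correctly record earlier that axiom (v) yields $x\wedge y=(x\to y)\odot x=(y\to x)\odot y=x\odot(x\squig y)=y\odot(y\squig x)$, and the crossed term you must bound is $\bigl(z\odot(y\to x)\odot x\bigr)\wedge\bigl(z\odot(x\to y)\odot y\bigr)$, whose factors are $(y\to x)\odot x$ and $(x\to y)\odot y$ --- exactly the two combinations that axiom (v) does \emph{not} cover. Your closing claim ``$(y\to x)\odot x=x\wedge y=(x\to y)\odot y$ directly from axiom (v)'' is false; indeed you had already flagged the problem yourself a few lines earlier (``$(y\to x)\odot x$ need not be below $y$ in general'') and then overrode it. A concrete counterexample: in the negative cone of $\mathbb{Z}$ (a commutative, linearly ordered, hence prelinear pseudo hoop) take $x=-2$, $y=-1$; then $y\to x=-1$ and $(y\to x)\odot x=-3\neq -2=x\wedge y$. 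So the bound on the crossed term, which is the entire content of the hard inequality, is unproved. (There is also a smaller ordering issue: you invoke distributivity of the lattice reduct of a prelinear pseudo hoop, which in this paper is only available after Proposition \ref{pr:2.2}.)

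The ``distribute $1=(\cdot\;\squig\;\cdot)\vee(\cdot\;\squig\;\cdot)$'' idea can be salvaged, but it must be applied to the meet itself and on the correct side: setting $w:=(z\odot x)\wedge(z\odot y)$, one has $w=w\odot\bigl((x\squig y)\vee(y\squig x)\bigr)=\bigl(w\odot(x\squig y)\bigr)\vee\bigl(w\odot(y\squig x)\bigr)$, and now $w\odot(x\squig y)\le z\odot x\odot(x\squig y)=z\odot(x\wedge y)$ and $w\odot(y\squig x)\le z\odot y\odot(y\squig x)=z\odot(x\wedge y)$ by the \emph{correct} forms in axiom (v); this avoids both the crossed term and any appeal to lattice distributivity. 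The paper itself proceeds quite differently, by residuation: it shows that the element $s=(z\odot x\squig z\odot y)\squig(z\odot x\squig z\odot(x\wedge y))$ dominates both $x\squig y$ and $y\squig x$, so $s=1$ by prelinearity, which yields $(z\odot x)\odot(z\odot x\squig z\odot y)\le z\odot(x\wedge y)$, i.e.\ the desired inequality.
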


\begin{proof}
First of all, if $a\le b$, then $a \le c\squig b$ and $a \le c\to b$
for any $c \in M.$ Indeed, $a \le b \le c\squig b.$

Second, for all $a,b,c \in M,$ $(a\squig b)\squig (a\squig
c)=(b\squig a)\squig (b\squig c)$ and $(a\to b)\to (a\to c)=(b\to
a)\to (b\to c).$ In fact, by \cite[Thm 2.2]{GLP}, $(a\squig b)\squig
(a\squig c)=(a\odot (a\squig b))\squig c = (a\wedge b)\squig c =
(b\wedge a)\squig c= (b\odot (b \squig a)) \squig c = (b\squig
a)\squig (b \squig c).$ In the same way we prove the second
equality.

By \cite[Lem 2.5(19)]{GLP}, we have $x \squig y = x\squig (x\wedge
y) \le z \odot x \squig z\odot (x\wedge y).$  Hence, by the first
part, $x\squig y \le (z\odot x \squig z\odot y)\squig (z\odot x
\squig z\odot (x\wedge y)).$ In a similar way, $y\squig x \le
(z\odot y \squig z\odot x)\squig (z\odot y \squig z\odot (x\wedge
y)).$  By the second remark of the proof, the right-hand sides of
the last two inequalities are the same, we denote it by $s.$ Hence,
$x\squig y, y \squig x \le s$ and  prelinearity implies $s=1.$
Therefore, $z \odot x \squig z \odot y \le z\odot x \squig z\odot
(x\wedge y)$ and $(z\odot x)\odot (z\odot x \squig z\odot y) \le z
\odot (x\wedge y),$ i.e., $(z\odot x)\wedge (z\odot y) \le z \odot
(x\wedge y).$ The converse inequality, $z \odot (x\wedge y)\le
(z\odot x)\wedge (z\odot y)$ is obvious.  Hence, (i)  holds.

The proof of (ii) is similar.
\end{proof}

\vspace{2mm}  According to \cite{GLP}, we define, for all $x,y \in
M:$
\begin{align*}
x\vee_1 y &:=((x\squig y) \to y)\wedge ((y \squig x)\to x),\\
x\vee_2 y &:= ((x\to y) \squig y)\wedge ((y \to x)\squig x).
\end{align*}
Then $x,y \le x\vee_iy$ for $i=1,2.$

\begin{proposition}\label{pr:2.2}
If $M$ is a pseudo hoop with prelinearity, then $M$ is  basic,  $M$
is a lattice, and
$$((x\squig y) \to y)\wedge ((y \squig x)\to x)= x\vee y =
((x\to y) \squig y)\wedge ((y \to x)\squig x)\eqno(3.1)
$$
for all $x,y \in M.$
\end{proposition}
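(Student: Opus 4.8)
The plan is to prove the three assertions in turn, using only residuation together with the fact (\cite[Lem 2.6]{GLP}) that $\odot$ distributes over \emph{existing} suprema from both sides --- the only supremum ever invoked being the element $1=(x\to y)\vee(y\to x)=(x\squig y)\vee(y\squig x)$ supplied by prelinearity. I will freely use: the two adjunctions
\[ a\odot b\le c \iff a\le b\to c \iff b\le a\squig c \]
valid in every pseudo hoop (immediate from (iii), (iv) and the definition of $\le$); the isotonicity of $\odot$ in both arguments, whence $a\odot b\le a\wedge b$ since $a,b\le 1$; and the identities $(a\to b)\odot a=a\wedge b=a\odot(a\squig b)$ from (v).

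First I would establish (B1). By the adjunction $a\le b\to c\iff a\odot b\le c$, (B1) amounts to $\big((x\to y)\to z\big)\odot\big((y\to x)\to z\big)\le z$. Writing $A=(x\to y)\to z$ and $B=(y\to x)\to z$, we have $A\odot B\le A\wedge B$, so it suffices to show $A\wedge B\le z$. Multiplying $A\wedge B$ on the right by $x\to y$ and using $(a\to b)\odot a=a\wedge b$ gives $(A\wedge B)\odot(x\to y)\le A\odot(x\to y)=(x\to y)\wedge z\le z$, and symmetrically $(A\wedge B)\odot(y\to x)\le z$; since $(x\to y)\vee(y\to x)=1$, \cite[Lem 2.6]{GLP} then yields $A\wedge B=(A\wedge B)\odot 1=\big((A\wedge B)\odot(x\to y)\big)\vee\big((A\wedge B)\odot(y\to x)\big)\le z$. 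This proves (B1); (B2) follows by the left-handed mirror of this argument --- with $A'=(x\squig y)\squig z$ and $B'=(y\squig x)\squig z$, one multiplies $A'\wedge B'$ on the \emph{left} by $x\squig y$ and by $y\squig x$, uses $a\odot(a\squig b)=a\wedge b$ together with $(x\squig y)\vee(y\squig x)=1$ to conclude $A'\wedge B'\le z$, and then $B'\odot A'\le A'\wedge B'\le z$ gives (B2) via $b\le a\squig c\iff a\odot b\le c$. Hence $M$ is basic, and therefore a distributive lattice by \cite[Prop 4.6]{GLP}.

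Next I would prove (3.1). Since $x,y\le x\vee_i y$ is already recorded above, it remains only to show that each $x\vee_i y$ lies below an arbitrary common upper bound $z$ of $x$ and $y$. Put $w=x\vee_1 y=\big((x\squig y)\to y\big)\wedge\big((y\squig x)\to x\big)$. From $w\le(x\squig y)\to y$ and $w\le(y\squig x)\to x$ the adjunction $a\le b\to c\iff a\odot b\le c$ gives $w\odot(x\squig y)\le y\le z$ and $w\odot(y\squig x)\le x\le z$; distributing over $(x\squig y)\vee(y\squig x)=1$ then yields $w=w\odot 1=\big(w\odot(x\squig y)\big)\vee\big(w\odot(y\squig x)\big)\le z$. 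For $v=x\vee_2 y=\big((x\to y)\squig y\big)\wedge\big((y\to x)\squig x\big)$ the symmetric computation --- using $b\le a\squig c\iff a\odot b\le c$ and the right-hand distributivity of $\odot$ over joins --- gives $(x\to y)\odot v\le z$ and $(y\to x)\odot v\le z$, whence $v=1\odot v=\big((x\to y)\odot v\big)\vee\big((y\to x)\odot v\big)\le z$. Thus $x\vee_1 y=\sup\{x,y\}=x\vee_2 y$, which is exactly (3.1); in particular these suprema exist, so $M$ is a lattice.

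The argument is pure residuation bookkeeping, so I do not expect a genuine obstacle. The one point requiring care is keeping the noncommutative adjunctions straight --- one must remember that $a\odot b\le c$ is equivalent both to $a\le b\to c$ and to $b\le a\squig c$, so that in (B2) and in the $\vee_2$ half of (3.1) the multiplications happen on the side opposite to the $\to$-case. A secondary caveat is that each appeal to distributivity of $\odot$ over a join is legitimate only because that join exists, which here is guaranteed precisely by prelinearity.
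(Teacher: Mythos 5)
Your proof is correct. For the heart of the proposition --- the existence of $x\vee y$ and the identity (3.1) --- your argument is exactly the paper's: show that $a=((x\squig y)\to y)\wedge((y\squig x)\to x)$ is an upper bound of $x,y$, take an arbitrary upper bound $c$, and use $a=a\odot\big((x\squig y)\vee(y\squig x)\big)=\big(a\odot(x\squig y)\big)\vee\big(a\odot(y\squig x)\big)$ together with $(a\to b)\odot a=a\wedge b$ to get $a\le c$, with the mirror computation for $\vee_2$. Where you genuinely diverge is in how basicness is obtained: the paper, having shown $\vee_1=\vee=\vee_2$, simply invokes the characterization of \cite[Prop 4.7]{GLP} ($M$ is basic iff $\vee_1,\vee_2$ are associative and $(x\squig y)\vee_1(y\squig x)=1$), whereas you verify (B1) and (B2) directly from the residuation adjunctions, the identity $(a\to b)\odot a=a\wedge b=a\odot(a\squig b)$, and distributivity of $\odot$ over the prelinearity join. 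Your route is self-contained and makes transparent that the only join ever needed is the one prelinearity supplies; the paper's route is shorter but leans on an external equivalence. Both are sound, and your careful tracking of which side the multiplication happens on in the $\squig$ versus $\to$ cases is exactly the point that needs care.
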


\begin{proof}
Since every pseudo hoop is a $\wedge$-semilattice, we have to show
that $x\vee y$ exists in $M.$ Let $a$ be the left-hand side of
(3.1). Due to \cite[Prop 2.11]{GLP}, $a\ge x,y.$  Now let $x,y \le
c.$ We have $a = a \odot 1 = a \odot ((x\squig y)\vee (y \squig x))
= (a\odot (x\squig y))\vee (a \odot (y \squig x)).$   On the other
hand, $a\odot (x\squig y) = [((x\squig y) \to y)\wedge ((y \squig
x)\to x)]\odot (x\squig y)\le ((x\squig y)\to y) \odot (x\squig y) =
(x\squig y) \wedge y \le y\le c.$ In a similar way, we have $a \odot
(y \squig x) \le x\le c.$ Hence, $a \le c.$

The second equality can be proved in a similar approach.

Now applying  \cite[Prop 4.7]{GLP}, we have that $M$ is basic.
\end{proof}

\begin{remark}\label{re:2.3} {\rm
Proposition \ref{pr:2.2} generalizes \cite[Prop 4.7]{GLP} where it
was proved that a pseudo hoop $M$  is basic iff $\vee_1$ and
$\vee_2$ are associative and $(x\squig y)\vee_1 (y\squig x)=1$ for
all $x,y \in M.$ }
\end{remark}

\begin{proposition}\label{pr:2.4}
The variety of bounded pseudo hoops with prelinearity  is termwise
equivalent to the variety of pseudo BL-algebras.
\end{proposition}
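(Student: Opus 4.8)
The plan is to produce a pair of mutually inverse term-translations between the two signatures and verify that they carry each variety onto the other. A bounded pseudo hoop lives in the signature $\langle\odot,\to,\squig,0,1\rangle$, whereas in the signature of a pseudo BL-algebra the lattice operations $\wedge$ and $\vee$ are primitive; accordingly, in one direction I simply pass to the $\langle\odot,\to,\squig,0,1\rangle$-reduct, and in the other I adjoin $\wedge$ and $\vee$ defined by the pseudo hoop terms $x\wedge y:=(x\to y)\odot x$ and $x\vee y:=((x\squig y)\to y)\wedge((y\squig x)\to x)$.

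First I would check that the $\langle\odot,\to,\squig,0,1\rangle$-reduct of a pseudo BL-algebra $A$ is a bounded pseudo hoop satisfying prelinearity. This needs nothing beyond unwinding the definition: the currying identities (iii) and (iv), together with (i) and (ii), are part of being an integral residuated lattice, (v) is precisely the divisibility law, $0$ is the least element, and prelinearity is assumed outright. (If one reads the definition given in Section 2 literally, a pseudo BL-algebra is already a bounded pseudo hoop that happens to be a lattice with prelinearity, and this direction is then immediate.)

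Next I would check that a bounded pseudo hoop $M$ with prelinearity, expanded by the two terms above, is a pseudo BL-algebra. By the facts recalled in Section 2, $\le$ is a partial order on $M$ for which $x\wedge y=(x\to y)\odot x$, so the first term genuinely computes the meet; by Proposition \ref{pr:2.2}, $M$ is a lattice and $x\vee y$ equals the second term $((x\squig y)\to y)\wedge((y\squig x)\to x)$, while \cite[Prop 4.6]{GLP} guarantees that this lattice is distributive. Since $M$ already carries a least element $0$ and satisfies prelinearity, its expansion is a (bounded) pseudo BL-algebra; the pseudo hoop axioms themselves are unaffected, as we have not touched $\odot,\to,\squig,1$.

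It remains to see that the two translations are mutually inverse, and to note where the (minor) care is needed. Starting from a bounded pseudo hoop with prelinearity, expanding by the terms, and then taking the reduct simply discards the adjoined $\wedge,\vee$; starting from a pseudo BL-algebra $A$, passing to its pseudo hoop reduct, and expanding again recovers $A$, because the lattice order of $A$ coincides with the natural order of its reduct (so the meet of $A$ is $(x\to y)\odot x$) and, $A$ having prelinearity, Proposition \ref{pr:2.2} applied to the reduct identifies the join of $A$ with $((x\squig y)\to y)\wedge((y\squig x)\to x)$. Hence the varieties are termwise equivalent. I do not expect a deep obstacle here: essentially all the work is already in Proposition \ref{pr:2.2}, which exhibits the join of a prelinear pseudo hoop by a term. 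The two points demanding a little attention are (a) that the class of bounded pseudo hoops with prelinearity is genuinely a variety, for which one uses that prelinearity is, for pseudo hoops, equivalent to being basic in the sense of (B1)–(B2) and hence equational (cf. Proposition \ref{pr:2.2} and Remark \ref{re:2.3}), and (b) the routine bookkeeping identifying the term-defined $\wedge,\vee$ on a pseudo BL-algebra with its given lattice operations.
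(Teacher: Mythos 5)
Your proof is correct and follows essentially the same route as the paper: both directions hinge on Proposition \ref{pr:2.2} to show that a prelinear bounded pseudo hoop is basic and has a term-definable join, with the converse being immediate from the definition of a pseudo BL-algebra. The only difference is that the paper delegates the term-translation bookkeeping to a citation of \cite[Prop 4.10]{GLP}, whereas you carry it out explicitly.
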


\begin{proof}
If $M$ is a bounded pseudo hoop with prelinearity, according to
Proposition \ref{pr:2.2}, $M$ is basic and  due to  \cite[Prop
4.10]{GLP}, $M$ is termwise equivalent to a pseudo BL-algebra.

Now let $M$ be a pseudo BL-algebra, then it is a bounded pseudo hoop
with prelinearity.
\end{proof}

\section{Filters, Prime Filters and the Riesz Decomposition Property}

In this section, we extend some results on filters and we show that
every pseudo hoop satisfies the Riesz Decomposition Property. This
property was known only for pseudo MV-algebras, \cite{Dvu1}.

We are saying that a pseudo hoop $M$ satisfies the \textit{Riesz
decomposition property} ((RDP) for short) if $a \ge b\odot c$
implies that there are two elements $b_1\ge b$ and $c_1 \ge c$ such
that $a = b_1\odot c_1.$  For example, (i) every pseudo MV-algebra
satisfies (RDP), (ii) every cancellative pseudo hoop ($\cong G^-$
for some $\ell$-group $G$) satisfies (RDP), (iii) if $M_0$ and $M_1$
satisfies (RDP), so does $M_0\oplus M_1$, (iv) every linearly
ordered pseudo hoop (thanks to the Aglian\`o-Montagna decomposition
of linearly ordered pseudo hoops \cite{Dvu4}) satisfies (RDP), (v)
if $G$ is an $\ell$-group, then the kite pseudo BL-algebra $G^\dag$
satisfies (RDP) (for kites see e.g. \cite{JiMo, DGK}).  In what
follows, we show that all the latter examples are special cases of a
more general result saying that every pseudo hoop satisfies (RDP).

\begin{theorem}\label{th:3.1}
Every pseudo hoop $M$ satisfies {\rm (RDP)}.
\end{theorem}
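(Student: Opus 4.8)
The plan is to reduce the statement to a fact about the monoid reduct and then exploit the residuation identities (iii)--(v) in the definition of a pseudo hoop. Suppose $a \ge b\odot c$. The natural candidates are the "truncations" of $b$ and $c$ towards $a$; since everything lives above $b\odot c$ it is reasonable to try $c_1 := (b\odot c)\squig a$ intersected with... actually the cleanest guess is to set $c_1 := (b \to a)$ pushed down, but let me organize the idea differently. Put $d := b\odot c$, so $a \ge d$. I would first look for $c_1 \ge c$ with $b\odot c_1 = $ something comparable to $a$, and then correct $b$.

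Here is the approach I would carry out. Set $c_1 := (b\squig a)\wedge$ (something $\ge c$); more precisely, since $b\odot c = d \le a$ we have $c \le b\squig d \le b\squig a$, so $c_1 := b\squig a$ satisfies $c_1 \ge c$. Now consider $b\odot c_1 = b\odot(b\squig a) = b\wedge a$ by identity (v) of the pseudo hoop definition (the clause $b\odot(b\squig a) = a\odot(a\squig b)$, which equals $a\wedge b$). Since $a \ge b\odot c$, we need not have $a \le b$, so $b\wedge a$ need not be $a$. Thus I must also enlarge $b$. The trick is to apply the same move on the other side: having produced $b\wedge a = b\odot c_1$, I want $b_1 \ge b$ with $b_1 \odot c_1 = a$. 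Now $b\wedge a \le a$ gives, via residuation, $b\odot c_1 \le a$ hence $b \le a \rd c_1$ in the notation where $a\rd c_1$ denotes the right residual; concretely $b \le c_1 \to a$. So set $b_1 := c_1 \to a$. Then $b_1 \ge b$, and $b_1 \odot c_1 = (c_1 \to a)\odot c_1 = a \wedge c_1'$... — by identity (v) again, $(c_1\to a)\odot c_1 = (a\to c_1)\odot a = a\wedge c_1$ — wait, that is only $\le a$. The point I must verify is that $a\wedge c_1 = a$, i.e. $a \le c_1 = b\squig a$, equivalently $b\odot a \le a$; but $b \le 1$ need not hold in an unbounded hoop, yet $b\odot a \le 1\odot a = a$ holds because... no, $b$ need not be $\le 1$ — actually in a pseudo hoop every element satisfies $x \le 1$ since $x\to 1 = 1$? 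Let me reconsider: $x\le 1$ iff $x\to 1 = 1$, and indeed $x \to 1 = 1$ always (from $x\odot 1 = x$ and the adjunction), so $M$ has top element $1$ and every element is negative. Hence $b\odot a \le a$, so $a \le b\squig a = c_1$, so $a\wedge c_1 = a$, and $b_1\odot c_1 = a$ as desired.

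So the proof is genuinely short: take $c_1 := b\squig a$ and $b_1 := c_1 \to a$, then check $b_1 \ge b$, $c_1 \ge c$, and $b_1\odot c_1 = a$, each step being an application of the residuation law (iii)--(iv), the key identity (v), and the fact that $1$ is the top element. I would write out the three verifications carefully: $c_1 \ge c$ follows from $b\odot c \le a$ by adjunction; $b_1 \ge b$ follows from $b\odot c_1 = b\odot(b\squig a) = a\wedge b \le a$ by adjunction; and $b_1 \odot c_1 = (c_1\to a)\odot c_1 = a\wedge c_1 = a$, using $(x\to y)\odot x = x\wedge y$ from (v) together with $c_1 = b\squig a \ge a$ (because $b\odot a \le a$).

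The main obstacle, and the thing to be most careful about, is getting the handedness of the residuals right: pseudo hoops are noncommutative, so one must consistently track which of $\to$ and $\squig$ is the residual on which side, and make sure the instances of identity (v) invoked are the correct ones (e.g. distinguishing $b\odot(b\squig a)$ from $(b\to a)\odot b$). A secondary point worth stating explicitly is that $1$ is the greatest element of $M$, which is what makes $b\odot a \le a$ work and hence collapses $a\wedge c_1$ to $a$; without that observation the argument stalls. Once these bookkeeping issues are pinned down, no nontrivial calculation remains.
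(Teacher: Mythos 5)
Your proof is correct and is essentially the paper's own argument: the paper takes $c' := (c\to a)\squig a$ and $b' := c'\to a$, whereas you take the (possibly larger, since $b\le c\to a$ implies $(c\to a)\squig a\le b\squig a$) witness $c_1 := b\squig a$ and $b_1 := c_1\to a$, and the three verifications are the same residuation steps together with $(x\to y)\odot x = x\wedge y$. One cosmetic remark: your parenthetical justification of integrality is off ($x\odot 1 = x$ plus adjunction yields $x\le 1\to x$, not $x\to 1=1$), but the inequality you actually need, $a\le b\squig a$, is the standard pseudo-hoop fact $y\le x\squig y$ that the paper itself invokes without comment, so this is not a gap.
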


\begin{proof}
Let $a,b,c\in M$ be such that $b\odot c\leq a$. Then we denote
$$b' := ((c\rightarrow a)\rightsquigarrow a)\rightarrow a, \quad
c' := (c\rightarrow a)\rightsquigarrow a. $$

Clearly $c\leq (c\rightarrow a)\rightsquigarrow a = c'$. Moreover,
$b\odot c\leq a$ yields $b\leq c\rightarrow a$. Thus also
$(c\rightarrow a)\rightsquigarrow a\leq b\rightsquigarrow a$ holds.
Because  pseudo hoops are  residuated structures,
$b\odot((c\rightarrow a)\rightsquigarrow a)\leq a$ and $ b\leq
((c\rightarrow a)\rightsquigarrow a)\rightarrow a = b'$ holds.
Finally, we have

  \begin{eqnarray*}
            b'\odot c' &=& (((c\rightarrow a)\rightsquigarrow a)\rightarrow a)\odot ((c\rightarrow a)\rightsquigarrow a)\\
                &=& ((c\rightarrow a)\rightsquigarrow a) \wedge a\\
                &=& a.
        \end{eqnarray*}
\end{proof}

If $M$ is a pseudo hoop and $a,b \in M,$ then
$$
F(a\odot b) =F(a) \vee F(b) = F(b\odot a),\eqno(4.1)
$$
If $a\vee b $ exists in $M$, then, \cite[Prop 3.4]{GLP},
$$
F(a\vee b) = F(a)\cap F(b).\eqno(4.2)
$$

Let $F$ be a filter of a pseudo hoop $M.$  We say that two elements
$a,b \in M$ are in a relation $a \cong_F b$ iff $a\to b, b\to a \in
F.$  Due to \cite[Prop 3.6]{GLP}, $\cong_F$ is an  equivalence
relation. Moreover, $a\cong_F b$ iff $x\odot a =y\odot b$ for some
$x,y \in F.$  We denote by $Fa:=a/F$ the equivalent class
corresponding to the element $a\in M$ with respect to $\cong_F,$
hence  $ F\odot a =\{x\odot a: x \in F\}\subseteq Fa$ and $F\odot 1=
F1=F.$ We can introduce a partial binary operation $\le:=\le_F$ on
$M/F=\{Fa: a\in M\}$ via $Fa \le Fb$ iff $a\to b \in F.$ This is
equivalent to $x\odot a \le b$ for some $x \in F.$ Indeed, let $Fa
\le Fb$, set $x = a\to b \in F$ and then $a\wedge b =(a\to b)\odot a
\le b.$ Conversely, let $x\odot a\le b$ for some $x \in F.$ Then $1=
x\odot a \to b = x \to (a\to b)$ which yields $x\le a\to b$ so that
$a\to b\in F.$

Hence, the relation $\le:=\le_F$ is a partial ordering on the set of
$M/F:$ (i)  clearly $Fa \le Fa,$ (ii) if $Fa\le  Fb$ and $Fb\le Fa,$
then $Fa = Fb,$ and if $Fa\le Fb,$ $Fb\le Fc,$ then $Fa \le Fc$
because we have $v_1\odot a\le b$ and $v_2\odot b\le c$ for some
$v_1,v_2 \in F.$ Then $v_2\odot v_1\odot a \le v_2\odot b \le c.$

These quotient classes are so-called  the \textit{right classes}.
We can define also the left classes under the  equivalence relation
$a\, _F\cong b$ iff  $a\squig b, b\squig a\in F,$  and let $aF$ be
the equivalence class with respect to $\, _F\cong.$  Then $aF \le
bF$ iff $a\odot f\le b$ for some $f \in F.$

Let $\mathcal F(M)$ be the system of all  filters   of a pseudo hoop
$M.$

\begin{proposition}\label{pr:p1}
The system of all filters, $\mathcal F(M),$ of a pseudo hoop $M$ is
a distributive lattice under the set-theoretical inclusion.  In
addition, $F\cap \bigvee_i F_i = \bigvee_i(F\cap F_i).$
\end{proposition}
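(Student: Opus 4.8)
The plan is to establish the two claims separately. First, since $\mathcal F(M)$ is closed under arbitrary intersections (the intersection of any family of filters is clearly again a filter, containing $1$), the poset $(\mathcal F(M),\subseteq)$ is a complete lattice, with meet given by intersection and join given by $\bigvee_i F_i=\bigcap\{G\in\mathcal F(M): G\supseteq F_i\text{ for all }i\}$. For the explicit description of joins I would use the generated-filter formula $F(a)=\{x: x\ge a^n\text{ for some }n\ge1\}$ together with (4.1), namely $F(a\odot b)=F(a)\vee F(b)$: concretely, $\bigvee_i F_i$ is the set of all $x\in M$ such that $x\ge a_1\odot a_2\odot\cdots\odot a_k$ for some finite choice of elements $a_j$ each lying in some $F_{i_j}$. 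This is the working description I will need for the distributivity computations.

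Next I would prove the frame law $F\cap\bigvee_i F_i=\bigvee_i(F\cap F_i)$ (which in particular gives finite distributivity, so $\mathcal F(M)$ is a distributive lattice). The inclusion $\supseteq$ is trivial. For $\subseteq$, take $x\in F$ with $x\ge a_1\odot\cdots\odot a_k$, where each $a_j\in F_{i_j}$. The key idea is to "insert copies of $x$": since $x\in F$ and $x$ is above the product, I want to rewrite $x$ as being above a product of elements each of which lies in $F\cap F_{i_j}$. Here the Riesz Decomposition Property (Theorem \ref{th:3.1}) is exactly the tool: from $x\ge a_1\odot(a_2\odot\cdots\odot a_k)$, (RDP) yields $x=b_1\odot c_1$ with $b_1\ge a_1$ and $c_1\ge a_2\odot\cdots\odot a_k$; iterating, $x=b_1\odot b_2\odot\cdots\odot b_k$ with $b_j\ge a_j$. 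Now each $b_j\ge a_j\in F_{i_j}$ gives $b_j\in F_{i_j}$, and since $x\le b_j$ (as $b_j$ is a left/right factor of $x$, and $x\le$ each factor in a pseudo hoop because factors of $x$ dominate $x$ — using $b_j\odot(\text{rest})=x$ so $x\le b_j$) wait, I must be careful about the order of $\le$: in a pseudo hoop $x\odot y\le x$ and $x\odot y\le y$, so from $x=b_1\odot\cdots\odot b_k$ we get $x\le b_j$ for each $j$, hence $b_j\in F$ as well. Thus $b_j\in F\cap F_{i_j}$, and $x=b_1\odot\cdots\odot b_k\in\bigvee_i(F\cap F_i)$. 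This settles the infinite distributive law, and finite distributivity is the special case of two (or finitely many) filters.

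I expect the main obstacle to be the bookkeeping in the iterated application of (RDP): one must check that after splitting off $b_1$, the remaining factor $c_1$ is still $\ge a_2\odot\cdots\odot a_k$ so that the induction goes through, and that each $b_j$ genuinely lands in the correct $F_{i_j}$ and simultaneously in $F$. The second point is immediate from the two basic inequalities $x\le b_j$ (factors dominate the product) and the upward-closure of filters, but it is the conceptual heart of why (RDP) makes the frame law work — without it, a generator of $\bigvee_i F_i$ need not decompose into generators drawn from the individual $F_i$ that also sit below $x$. A minor additional check is that the description of $\bigvee_i F_i$ as the set of elements above finite $\odot$-products of members of the $F_i$ is correct; this follows from (4.1) and associativity of $\odot$, and from the fact that such a set is already a filter (upward closed by construction, closed under $\odot$ since a product of two such products is again such a product).
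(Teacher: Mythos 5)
Your proposal is correct and follows essentially the same route as the paper's proof: describe $\bigvee_i F_i$ as the set of elements above finite $\odot$-products of members of the $F_i$, apply (RDP) to rewrite $x$ as a product of factors $b_j\ge a_j$, and use $x\le b_j$ (products lie below their factors) together with upward closure to place each $b_j$ in $F\cap F_{i_j}$. Your explicit treatment of the iterated (RDP) step is a small but welcome addition of detail that the paper leaves implicit.
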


\begin{proof}
If $\{F_i\}$ is a system of filters, then $\bigvee_i F_i = \{x\in M: x \ge f_1\odot \cdots \odot f_n,\ f_1\in
F_{i_1},\ldots, f_n \in F_{i_n}, $ for some $i_1,\ldots, i_n, n\ge
1\}\in \mathcal F(M),$ and $\bigcap_i F_i \in \mathcal F(M).$

It is clear that $ F\cap \bigvee_i F_i \supseteq \bigvee_i(F\cap
F_i).$  Let $x \in  F\cap \bigvee_i F_i.$  Then $x \ge
f_1\odot\cdots \odot f_n$ where $f_1\in F_{i_1},\ldots,f_n\in
F_{i_n}.$ Because every pseudo hoop satisfies (RDP), Theorem
\ref{th:3.1}, $x= f_1^0\odot \cdots \odot f_n^0$ where $f_j^0\ge
f_j.$ Therefore, $x \le f_j^0$ so that $f_j^0 \in F\cap F_{i_j}$ and
$x \in \bigvee_{j=1}^n (F\cap F_{i_j}) \subseteq \bigvee_i (F\cap
F_i).$

The lattice distributivity is clear from the first part of the present proof.
\end{proof}

\vspace{2mm} A filter $F$ of a pseudo hoop $M$ is said to be
\textit{prime} if, for two filters $F_1,F_2$ on $M,$ $F_1\cap F_2
\subseteq F$ entails $F_1\subseteq F$ or $F_2 \subseteq F.$  We
denote by $\mathcal P(M)$ the system of all prime filters of a
pseudo hoop $M.$

We note a prime filter $F$ is \textit{minimal prime} if it does not
contains properly another prime filter of $M.$ We stress that a
minimal prime filter exists always in any basic pseudo hoop $M$
which admits a maximal lattice ideal of the lattice reduct of $M.$

\begin{proposition}\label{pr:p2}
Let $F$ be a filter of   a basic pseudo hoop $M.$    Let us define the
following statements:

\begin{enumerate}

\item[{\rm (i)}] $F$ is prime.

\item[{\rm (ii)}] If $f\vee g = 1,$ then $f \in F$ or $g \in F.$

\item[{\rm (iii)}] For all $f,g \in M,$ $f\to g \in F$
or $g\to f \in F.$

\item[{\rm (iii')}] For all $f,g \in M,$ $f\squig g \in
F$ or $g\squig f \in F.$

\item[{\rm (iv)}] If $f\vee g \in F,$ then $f \in F$ or
$g \in F.$

\item[{\rm (v)}] If $f,g \in M,$ then there is $c \in
F$ such that $c\odot f \le g$ or $c\odot g \le f.$

\item[{\rm (vi)}] If $F_1$ and $F_2$ are two filters of
$M$ containing $F,$ then $F_1\subseteq F_2$ or $F_2\subseteq F_1.$

\item[{\rm (vii)}] If $F_1$ and $F_2$ are two filters
of $M$ such that $F\subsetneq F_1$ and $F\subsetneq F_2,$ then $F
\subsetneq F_1\cap F_2.$

\item[{\rm (viii)}] If $f,g \notin F,$  then $f\vee g
\notin F.$
\end{enumerate}

Then all statements {\rm (i)}--{\rm (viii)}  are equivalent.

\end{proposition}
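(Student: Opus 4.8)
The plan is to prove the cycle of implications among (i)--(viii) rather than all pairwise equivalences, choosing an order that makes each single step short. A natural route is
\[
\text{(i)} \Rightarrow \text{(ii)} \Rightarrow \text{(iii)} \Leftrightarrow \text{(iii')} \Rightarrow \text{(iv)} \Rightarrow \text{(v)} \Rightarrow \text{(vi)} \Rightarrow \text{(vii)} \Rightarrow \text{(viii)} \Rightarrow \text{(i)},
\]
with the side-loop (iii)$\Leftrightarrow$(iii') handled separately. The basic toolkit is: prelinearity of basic pseudo hoops (so $(f\to g)\vee(g\to f)=1$ and dually), Proposition~\ref{pr:2.2} giving the explicit joins in (3.1), the description of $F(a)$ and formulas (4.1), (4.2), and the quotient-order characterisation $Fa\le Fb$ iff $a\to b\in F$ iff $x\odot a\le b$ for some $x\in F$.

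For the individual steps I would argue as follows. (i)$\Rightarrow$(ii): if $f\vee g=1$ then $F(f)\cap F(g)=F(f\vee g)=F(1)=\{1\}\subseteq F$ by (4.2), so primeness forces $F(f)\subseteq F$ or $F(g)\subseteq F$, i.e.\ $f\in F$ or $g\in F$. (ii)$\Rightarrow$(iii): apply (ii) to the prelinearity identity $(f\to g)\vee(g\to f)=1$. (iii)$\Leftrightarrow$(iii'): use the fact that $f\to g\in F$ is equivalent to $Ff\le Fg$, while $f\squig g\in F$ is equivalent to $fF\le gF$ for the left classes, and that both are equivalent to ``there is $c\in F$ with $c\odot f\le g$'' (one direction is the quotient-order computation already in the text; for the other, from $c\odot f\le g$ one gets $c\le f\squig g$ hence $f\squig g\in F$, and symmetrically $c\le f\to g$). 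So (iii), (iii') and (v) all say the same thing, which also disposes of (iii)$\Rightarrow$(v). (v)$\Rightarrow$(iv): if $f\vee g\in F$ and, say, $c\odot f\le g$ with $c\in F$, then $f\vee g\le ((g\to f)\to f)$ via (3.1) is the wrong direction; instead observe $c\odot(f\vee g)=(c\odot f)\vee(c\odot g)\le g\vee g=g$ when $c\odot f\le g$ and $c\odot g\le g$ — more carefully, from $c\odot f\le g$ we get $c\odot(f\vee g)= (c\odot f)\vee(c\odot g)\le g\vee(c\odot g)$, and $c\odot g\le g$ always, so $c\odot(f\vee g)\le g$; since $f\vee g\in F$ and $c\in F$, $c\odot(f\vee g)\in F$, hence $g\in F$. (Symmetrically $f\in F$ in the other case.) (iv)$\Rightarrow$(vi): given $F\subseteq F_1,F_2$ and $a\in F_1$, $b\in F_2$, we have $a\vee b\in F_1$ and $a\vee b\in F_2$; if $F_1\not\subseteq F_2$ pick $a\in F_1\setminus F_2$, and for arbitrary $b\in F_2$ note $a\vee b\in F_1\cap F_2\subseteq$ — here instead use (iv) applied to $F_2$: since $a\vee b\ge b\in F_2$, $a\vee b\in F_2$, so (iv) gives $a\in F_2$ or $b\in F_2$; as $a\notin F_2$, $b\in F_2$ — but that is automatic. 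The real argument: suppose neither inclusion holds, take $a\in F_1\setminus F_2$ and $b\in F_2\setminus F_1$; then $a\vee b\in F_1$ (as $\ge a$) and $a\vee b\in F_2$ (as $\ge b$), and applying (iv) in, say, the filter $F$ — but $a\vee b$ need not lie in $F$. So for (vi) I would instead go through the quotient order: (iii) says $M/F$ is linearly ordered, and filters containing $F$ correspond to order-filters of $M/F$, which are linearly ordered by inclusion; this is cleaner, so I would prove (iii)$\Rightarrow$(vi) directly. (vi)$\Rightarrow$(vii): immediate, since if $F_1\subseteq F_2$ then $F_1\cap F_2=F_1\supsetneq F$. (vii)$\Rightarrow$(viii): contrapositive — if $f,g\notin F$ but $f\vee g\in F$, then $F_1=F\vee F(f)$ and $F_2=F\vee F(g)$ properly contain $F$, while $F_1\cap F_2\supseteq F\vee(F(f)\cap F(g))=F\vee F(f\vee g)=F$ and in fact every element of $F_1\cap F_2$ lies in $F$: using distributivity of $\mathcal F(M)$ (Proposition~\ref{pr:p1}), $F_1\cap F_2=(F\vee F(f))\cap(F\vee F(g))=F\vee(F(f)\cap F(g))=F\vee F(f\vee g)=F$ since $f\vee g\in F$. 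This contradicts (vii). Finally (viii)$\Rightarrow$(i): if $F_1\cap F_2\subseteq F$ but $F_1\not\subseteq F$ and $F_2\not\subseteq F$, pick $f\in F_1\setminus F$, $g\in F_2\setminus F$; then $f\vee g\in F_1\cap F_2\subseteq F$, contradicting (viii).

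The step I expect to be the main obstacle is getting (vi) cleanly, because the naive ``take witnesses in $F_1\setminus F_2$ and $F_2\setminus F_1$ and join them'' argument produces an element in $F_1\cap F_2$ but not obviously in $F$, so (iv) does not directly apply. The fix is to route through the linearity of the quotient order on $M/F$ furnished by (iii): prove (iii)$\Rightarrow$(vi) by noting that for filters $F_1,F_2\supseteq F$, either there is $a\in F_1\setminus F_2$ or not; if $a\in F_1$ and $b\in F_2$ are arbitrary, linearity of $M/F$ gives (WLOG) $Fb\le Fa$, i.e.\ $b\to a\in F\subseteq F_2$, whence $a\in F_2$ since $b\in F_2$ and $F_2$ is a deductive system — so $F_1\subseteq F_2$. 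The other potential nuisance, the equivalence (iii)$\Leftrightarrow$(iii'), is handled by the symmetric ``$c\odot f\le g$'' reformulation in (v), which is left/right neutral; I would make that the hub. Everything else is a short application of the quoted formulas (4.1), (4.2), the lattice distributivity of $\mathcal F(M)$, and the fact that $\odot$ distributes over finite $\vee$ (quoted from \cite[Lem 2.6]{GLP} just before Proposition~\ref{pr:2.1}).
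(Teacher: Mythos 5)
Your overall architecture (a cycle of one-step implications driven by (4.1), (4.2), prelinearity, the distributivity of $\mathcal F(M)$ from Proposition~\ref{pr:p1}, and the quotient order on $M/F$) is essentially the paper's, and most individual steps are fine: (i)$\Rightarrow$(ii), (ii)$\Rightarrow$(iii), (v)$\Rightarrow$(iv), (vi)$\Rightarrow$(vii), (vii)$\Rightarrow$(viii) all match the paper, your (viii)$\Rightarrow$(i) is a clean shortcut (the paper instead closes the loop via (v)$\Rightarrow$(i) and handles (viii) by noting (viii)$\Rightarrow$(iv) is immediate), and your (iii)$\Rightarrow$(vi) via linearity of $M/F$ is a legitimate alternative to the paper's more economical (v)$\Rightarrow$(vi) (take $f\in F_1\setminus F_2$, $g\in F_2\setminus F_1$, and $c\in F$ with $c\odot f\le g$; then $g\ge c\odot f\in F_1$, a contradiction).

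There is, however, a genuine gap at your ``hub''. You claim that $f\to g\in F$ and $f\squig g\in F$ are \emph{both} equivalent to ``there is $c\in F$ with $c\odot f\le g$''. That is false in the noncommutative setting: by residuation $c\odot f\le g$ iff $c\le f\to g$, so the left-multiplied condition characterizes $f\to g\in F$ only; the condition characterizing $f\squig g\in F$ is $f\odot c\le g$ for some $c\in F$ (this is exactly the distinction between the right classes $Fa$ and the left classes $aF$ in Section~4). Hence your argument establishes (iii)$\Leftrightarrow$(v) but says nothing about (iii$'$), which is therefore never tied into the cycle. The repair is the paper's route: prelinearity gives $(f\squig g)\vee(g\squig f)=1$, so (ii)$\Rightarrow$(iii$'$), and (iii$'$)$\Rightarrow$(iv) by the symmetric computation $g=(f\vee g)\wedge g=(f\vee g)\odot((f\vee g)\squig g)\in F$. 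A second, smaller loose end: in your final version (iv) only receives an implication (from (v)) and emits none, so the equivalence of (iv) with the rest is not formally closed; this is fixed in one line by observing that (iv) and (viii) are contrapositives of each other.
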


\begin{proof} (i) $\Rightarrow$ (ii). By  (4.2), $F(f)
\cap F(g) = F(f\vee g)=F(1)=\{1\},$ so that $F(f) \subseteq F$ or
$F(g)\subseteq F,$ and whence $f \in F$ or $g \in G.$

(ii) $\Rightarrow$ (iii), and (ii) $\Rightarrow$ (iii').  They
follow from prelinearity.

(iii) $\Rightarrow$ (iv). Let $f\vee g \in F.$  Let  $f\to g\in F$
or $g\to f\in F.$ Since $(f\vee g)\to g = f\to g,$ in the first case
we have $g = g \wedge (f\vee g) =((f\vee g)\to g)\odot (f\vee g) \in
F$ and similarly in the second one.  In the same manner, we have
(iii') $\Rightarrow$ (iv).

(iv) $\Rightarrow$ (v).  From prelinearity,   let e.g. $c:=f\to g
\in F.$ Then $c\odot f =(f\to g)\odot f = f\wedge g \le g.$

(v) $\Rightarrow$ (i).  Let $F_1\cap F_2 \subseteq F$ and let
$F_1\subsetneq F$ and $F_2 \subsetneq F.$  There are $f \in
F_1\setminus F$ and $g \in F_2\setminus F.$  By (v), there is $c\in
F$ such that, say $c\odot f \le g.$ By (4.2), we have $F(f\vee
g)=F(f)\cap F(g) \subseteq F_1\cap F_2 \subseteq F$ so that $f\vee
g\in F.$  Therefore, $F \ni c\odot (f\vee g) = c\odot f \vee c\odot
g \le g \in F,$ a contradiction.

(v) $\Rightarrow$ (vi).  Suppose that $f \in F_1 \setminus F_2$ and
$g\in F_2 \setminus F_1.$ Then there is $c \in F$ such that e.g.
$c\odot f \le g$ giving a contradiction $g \in F_1.$

(vi) $\Rightarrow$ (vii). Due to the assumption, $F_1\subseteq F_2$
or $F_2\subseteq F_1$ thus $F\subsetneq F_1\cap F_2.$

Because every pseudo hoop satisfies (RDP), we have the following
implications.

(vii) $\Rightarrow$ (viii). By Proposition \ref{pr:p1} and (4.2), we
have $F \subsetneq (F\vee F(f)) \cap (F\vee F(g)) = F \vee F(f\vee
g)$ giving $f\vee g \notin F.$

(viii) $\Rightarrow$ (iv). This is evident. \end{proof}

Now we present the Prime Filter Theorem for basic pseudo hoops.

\begin{lemma}\label{le:p3}  Let $M$ be a basic pseudo hoop. If $A$
is a lattice ideal of $M$ and $F$ is a filter of $M$ such that
$F\cap A=\emptyset,$  then there is a prime filter $P$ of $M$
containing $F$ and disjoint with $A.$
\end{lemma}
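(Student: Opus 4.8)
The plan is to run a standard Zorn's lemma argument to find a filter $P\supseteq F$ that is maximal with respect to being disjoint from the lattice ideal $A$, and then to use the characterizations in Proposition~\ref{pr:p2} to show that such a $P$ is automatically prime. First I would consider the family
$$
\mathcal{G}=\{G\in\mathcal F(M):\ F\subseteq G,\ G\cap A=\emptyset\},
$$
which is nonempty since $F\in\mathcal G$. It is closed under unions of chains: if $\{G_j\}$ is a chain in $\mathcal G$, then $\bigcup_j G_j$ is again a filter (upward closure and closure under $\odot$ are preserved along a chain), it contains $F$, and it meets $A$ only if some $G_j$ does, which is excluded. By Zorn's lemma $\mathcal G$ has a maximal element $P$.

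Next I would verify that $P$ is prime using criterion~(viii) of Proposition~\ref{pr:p2}: if $f,g\notin P$, then $f\vee g\notin P$. Suppose for contradiction that $f,g\notin P$ but $f\vee g\in P$. By maximality of $P$, each of the strictly larger filters $P\vee F(f)$ and $P\vee F(g)$ must meet $A$, so there are $a_1,a_2\in A$ with $a_1\in P\vee F(f)$ and $a_2\in P\vee F(g)$. Then $a_1\vee a_2\in A$ (lattice ideals are closed under finite joins), and $a_1\vee a_2$ lies in $(P\vee F(f))\cap(P\vee F(g))$. Now invoke the distributivity of $\mathcal F(M)$ from Proposition~\ref{pr:p1} together with~(4.2): since $f\vee g\in P$ we have $F(f\vee g)=F(f)\cap F(g)\subseteq P$, hence
$$
(P\vee F(f))\cap(P\vee F(g)) = P\vee\bigl(F(f)\cap F(g)\bigr) = P\vee F(f\vee g) = P.
$$
Therefore $a_1\vee a_2\in P$, contradicting $P\cap A=\emptyset$. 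Thus $P$ satisfies~(viii), so by Proposition~\ref{pr:p2} it is prime; it contains $F$ and is disjoint from $A$, as required.

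The step I expect to require the most care is the application of lattice distributivity of $\mathcal F(M)$: one needs $(P\vee F(f))\cap(P\vee F(g))=P\vee(F(f)\cap F(g))$, which is exactly the infinite-distributivity statement $F\cap\bigvee_i F_i=\bigvee_i(F\cap F_i)$ from Proposition~\ref{pr:p1} applied in the dual form, and this in turn rests on (RDP) via Theorem~\ref{th:3.1}. The only other delicate point is making sure $\mathcal G$ is genuinely closed under chain unions so Zorn applies; this is routine but worth spelling out. Everything else — that lattice ideals are closed under finite joins, and that $P\vee F(f)$ strictly contains $P$ when $f\notin P$ — is immediate from the definitions.
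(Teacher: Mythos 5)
Your proof is correct, and the Zorn's lemma setup is the same as the paper's, but the verification that the maximal disjoint filter $P$ is prime goes by a genuinely different route. The paper checks criterion (iii) of Proposition~\ref{pr:p2}: assuming $f\to g,\ g\to f\notin P$, it extracts elements $c_1\in(P\vee F(f\to g))\cap A$ and $c_2\in(P\vee F(g\to f))\cap A$, writes them as bounds of products $\prod_i(s_i\odot(f\to g))$ and $\prod_i(t_i\odot(g\to f))$ with $s_i,t_i\in P$, and then uses the inequality $g\vee(h\odot k)\ge(g\vee h)\odot(g\vee k)$ together with prelinearity ($(f\to g)\vee(g\to f)=1$) to force $c_1\vee c_2\ge u^{2n}\in P$, contradicting $P\cap A=\emptyset$. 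You instead check criterion (viii) and replace the element-wise computation by a lattice-theoretic one: the dual finite distributive law in $\mathcal F(M)$ (legitimate, since Proposition~\ref{pr:p1} makes $\mathcal F(M)$ a distributive lattice, and the two finite distributive laws are equivalent in any lattice) plus identity (4.2) give $(P\vee F(f))\cap(P\vee F(g))=P\vee F(f\vee g)=P$, so $a_1\vee a_2\in P\cap A$. Both arguments ultimately rest on the same two pillars --- closure of the lattice ideal $A$ under finite joins, and the machinery behind Proposition~\ref{pr:p2} (which already encodes prelinearity and (RDP)) --- so neither is more economical in its hypotheses; yours is cleaner in that it avoids the explicit product manipulation and the auxiliary inequality $g\vee(h\odot k)\ge(g\vee h)\odot(g\vee k)$, at the cost of leaning on the distributivity of the filter lattice, which in turn depends on (RDP) via Theorem~\ref{th:3.1}. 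All the individual steps you flag as delicate (chain-closure for Zorn, strictness of $P\subsetneq P\vee F(f)$, join-closure of $A$) do hold as you describe.
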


\begin{proof}
According to Zorn's Lemma, there is a maximal filter of $M$
containing $F$ and disjoint with $A.$  Applying criterion
Proposition \ref{pr:p2}(iii), we show that $P$ is prime. If not,
there are two elements $f$ and $g$ such that $f\to g, g\to f\notin
P.$

Let $P_1=P\vee F(f\to g)$ and $P_2 =P\vee F(g\to f).$ Due to the
choice of $P$, there are $c_1 \in P_1 \cap A$ and $c_2 \in P_2 \cap
A.$ Hence, $c_1 \ge \prod_{i=1}^n (s_i\odot (f\to g))$ and $c_2 \ge
\prod_{i=1}^n (t_i\odot (g\to f)),$ where $s_i, t_j \in P.$

Set $s = s_1\odot\cdots \odot s_n,$  $t=t_1\odot \cdots \odot t_n,$
and $u = s\odot t\in P.$

 We recall  an easy equality
$g\vee (h\odot k) \ge (g\vee h)\odot (g\vee k).$

Then $c_1 \vee c_2 \ge \prod_{i=1}^n (s_i\odot (f\to g)) \vee
\prod_{i=1}^n (t_i\odot (g\to f))  \ge \prod_i (\prod_j (u\odot
(f\to g)) \vee (u\odot (g\to f))) \ge \prod_{i,j} (u\odot (f\to g)
\vee u\odot (g\to f))= u^{2n}\in P$ Hence, $c_1\vee c_2 \in P$ that
gives a contradiction.
\end{proof}

We recall that an element $u$ of $M$ is said to be a \textit{strong unit} in $M$
if the filter of $M$ generated by $u$ is equal to $M.$

\begin{remark}\label{re:p3}
{\rm Let $M$ be a basic pseudo hoop.

(1)  The \textit{value} of an element $g\in M\setminus\{1\}$ is any
filter $V$ of $M$ that is maximal with respect to the property
$g\notin V.$ Due to Lemma \ref{le:p3}, a value $V$ exists and it is
prime. Let $\Val(g)$ be the set of all values of $g<1.$  The filter
$V^*$ generated by a value $V$ of $g$ and by the element $g$ is said
to be the \textit{cover} of $V.$

(2) We recall that a filter $F$ is \textit{finitely
meet-irreducible} if, for each two filters $F_1,F_2$ such that $F
\subsetneq F_1$ and $F\subsetneq F_2,$ we have $F \subsetneq F_1\cap
F_2.$ Due to Proposition \ref{pr:p2}(vii), the finite
meet-irreducibility is a sufficient and  necessary condition for a filter $F$ to be  prime.

(3)  Proposition \ref{pr:p2}(iii) says that $F$ is prime iff the set
of quotient classes $\{Fa: a \in M\}$ is linearly ordered.

(4)  Proposition \ref{pr:p2}(vi) says that the system of prime
filters, $\mathcal P(M),$ is a \textit{root system}.

(5) $M$ has a maximal filter iff $M$ admits a strong unit $u.$}
\end{remark}

Importance of values can be seen from the following
characterization.

\begin{lemma}\label{le:p4}
Let $M$ be a basic pseudo hoop. Then $f\le g$ if and only if $Vf \le
Vg$ for all values $V$ in $M.$ Moreover, let given $a \in M\setminus
\{1\},$ $V_a$ be a fixed value of $a.$  Then $f\le g$ if and only if
$V_af \le V_ag$ for each $a\in M.$
\end{lemma}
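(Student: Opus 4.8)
The statement has two halves; the second is an easy consequence of the first once we observe that for any $a\in M\setminus\{1\}$, $V_a$ is in particular \emph{a} value of some element (namely $a$), so it appears among all values of $M$. Thus it suffices to prove the first assertion: $f\le g$ iff $Vf\le Vg$ for every value $V$ of $M$.

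The forward direction is immediate: if $f\le g$ then $f\to g=1\in V$ for every filter $V$, so $Vf\le Vg$ by definition of the quotient order $\le_V$. The content is the converse. I would prove the contrapositive: suppose $f\not\le g$, i.e. $f\to g\ne 1$. Then $f\to g$ is a non-unit element of $M$, so by Lemma~\ref{le:p3} (applied with the principal lattice ideal $A=\{x: x\le f\to g\}$ and the trivial filter $\{1\}$, or more directly by Remark~\ref{re:p3}(1)) there exists a value $V\in\Val(f\to g)$, i.e. a filter maximal with respect to $f\to g\notin V$. For this $V$ we have $f\to g\notin V$, which says precisely $Vf\not\le Vg$. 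That completes the converse, hence the first assertion, and then the second follows by taking $a=f\to g$ (when $f\not\le g$) and noting that $V_{f\to g}$ is the chosen value at $f\to g$, so $V_{f\to g}f\not\le V_{f\to g}g$.

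For the second statement I should be slightly careful about the quantifier: it claims $f\le g$ iff $V_af\le V_ag$ for \emph{each} $a\in M$, where $V_a$ is one fixed value of $a$. The forward implication is again trivial. For the converse, assume $f\not\le g$; then $a:=f\to g\ne 1$, the fixed value $V_a=V_{f\to g}$ satisfies $f\to g\notin V_{f\to g}$ by construction (a value of $a$ omits $a$), hence $V_af\not\le V_ag$, so the right-hand side fails for this particular $a$. Thus if $V_af\le V_ag$ holds for all $a$, in particular for $a=f\to g$, we must have $f\le g$.

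The only subtlety — and the single place where basicness is genuinely used — is the existence of a value of a given non-unit element, which is exactly Remark~\ref{re:p3}(1) (itself resting on Lemma~\ref{le:p3} and Zorn's Lemma). Everything else is just unwinding the definition $Vf\le_V Vg \iff f\to g\in V$ established before Proposition~\ref{pr:p1}. I do not anticipate a real obstacle here; the proof is short, and the main thing to get right is the bookkeeping that "the value chosen at $f\to g$ certainly omits $f\to g$," which gives the strict inequality $V_af\not\le V_ag$.
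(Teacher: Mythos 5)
Your proof is correct, and it turns on the same key move as the paper's: given $f\not\le g$, pick a value $V$ of the non-unit element $f\to g$ (Lemma \ref{le:p3} / Remark \ref{re:p3}(1)), and observe that $f\to g\notin V$ forces $Vf\not\le Vg$. Where you differ is in execution. You invoke directly that $Vf\le Vg$ is, by the definition of $\le_V$ given in Section 4, equivalent to $f\to g\in V$, so the converse is a one-liner; the paper instead first proves that the quotient map preserves $\wedge$ (and $\vee$ when it exists), shows $V'f\not\le V'(f\wedge g)$ by unwinding the characterization $c\odot f\le f\wedge g$ for some $c\in V'$, and then derives the contradiction $V'f=V'f\wedge V'g=V'(f\wedge g)<V'f$. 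Your route is shorter and loses nothing for this lemma, since $c\odot f\le g$ and $c\odot f\le f\wedge g$ are equivalent anyway; the paper's detour has the side benefit of establishing the lattice-homomorphism facts $V(f\wedge g)=Vf\wedge Vg$ and $V(f\vee g)=Vf\vee Vg$, which are quietly reused later (properties (iii)--(iv) of $\phi_g$ in the proof of Theorem \ref{th:Hol}). Your treatment of the second assertion --- specialize to $a=f\to g$ and use that the fixed value $V_{f\to g}$ omits $f\to g$ by construction --- is exactly the right bookkeeping and matches what the paper intends by ``the proof of the second statement is the same.''
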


\begin{proof}
First we show that given a value $V,$ we have
$V (f\wedge g) = V f\wedge V g$ and if $f\vee g$ exists in $M$ then
$V(f\vee g) = Vf\vee Vg.$

It is clear that $V(f\wedge g)\le Vf, Vg$ and assume $Vh\le Vf,Vg.$
By definition of right classes, there are $c_1,c_2\in V$ such that
$c_1\odot h\le f$ and $c_2\odot h \le g.$ Hence, $c_1\le h\to f,$
$c_2\le h\to g$ and $c_1\wedge c_2 \le (h\to f)\wedge (h\to g) = h
\to (f\wedge g)$ giving $(c_1\wedge c_2)\odot h \le f\wedge g.$

Similarly, if  $Vh \ge Vf, Vg,$ there are $c_1,c_2 \in V$ such that
$c_1\odot f \le h$ and $c_2 \odot g \le h$. Then $c_1 \le f\to h$
and $c_2 \le g\to h$ giving $c_1\wedge c_2 \le (f\to h) \wedge (g\to
h)=(f\vee g)\to h$. Whence, $(c_1\wedge c_2)\odot (f\vee g) \le h.$

Now suppose  $Vf\le Vg$ for all values $V$ in $M$ and let $f\not\le
g.$ Then $f\to g <1$ and there is a value $V'$ of $f\to g.$ Then
$V'(f\to g) < V'1 =V'$ and $V'(f\wedge g)= V'((f\to g)\odot f) \le
V'f.$ We note that $V'f\not\le V'(f\wedge g)$ because then $c\odot f
\le (f\wedge g)$ for some $c \in V'$ and $c\le f\to (f\wedge g) =
f\to g$ giving a contradiction $f\to g \in V'.$ By the first part of
the proof, $V'f = V'f\wedge V'g = V'(f\wedge g) <V'f$ that is a
contradiction.

The converse statement is obvious.

The proof of the second statement is the same as that of the first one.
\end{proof}

\section{Visualization}

This section will visualize basic pseudo hoops in a Holland's
Representation Theorem type,  see e.g. \cite{Dar} which says that
every $\ell$-group can be embedded into the system of automorphisms
of a linearly ordered set. We show that this result can be extended
also for basic pseudo hoops. We will visualize a basic pseudo hoop
by a system of nondecreasing mapping of a linearly ordered set where
$\odot$-operation corresponds to composition of functions, and the
arrows $\to$ and $\squig$ are defined in a special way.

Let $\Omega$ be a linearly ordered set.  A mapping $f:\Omega \to
\Omega$ is said to be \textit{residutaed} provided there exists a
mapping $f^*:\Omega \to \Omega$ such that $(x)f \le y$ iff $x \le
(y)f^*,$  for all $x,y \in \Omega,$ and we refer to $f^*$ as the
\textit{residual} of $f.$

Let $e=\mbox{id}_\Omega.$ Since $(x)f \le (x)f$ we have $x \le
(x)f\circ f^*$ i.e., $e \le f\circ f^*$ and similarly $f^*\circ f
\le e.$ In addition, $f=f\circ f^*\circ f$ and $f^*=f^*\circ f \circ
f^*.$

If $f_1^*$ and $f^*_2$ are residuals of $f$, then $f_1^*=f_2^*.$
Indeed, we have $f_1^*= f_1^* \circ e \le f_1^*\circ f \circ
f^*_2\le f_2^*$ and by symmetry, $f_1^*= f_2^*.$  Therefore,
$(f\circ g)^*=g^*\circ f^*.$

For example, if $P$ is a prime filter of a basic pseudo hoop, set
$\Omega=M/P$ and given $a \in M$, let $f_a: M/P \to M/P$ be a
mapping defined by $(Px)f_a:= Px\odot a$, $Px \in \Omega_P.$ Then
the residual of $f_a$  is a mapping $f^*_a$ such that $(Px)f_a^*=
P(a\to x),$ $Px \in \Omega.$

Let $\mbox{\rm Mon}(\Omega)$ be the set of all   mappings
$\alpha:\Omega\to \Omega$ such that $\omega_1\le \omega_2$ entails
$(\omega_1)\alpha \le (\omega_2)\alpha.$  We say that $\alpha \le
\beta$ iff $(\omega)\alpha\le (\omega)\beta$ for each $\omega \in
\Omega.$ Then $\mbox{\rm Mon}(\Omega)$ is a lattice ordered
semigroup with the neutral element $e=\mbox{id}_{\Omega}.$

This is the main result of the present section:

\begin{theorem}\label{th:Hol}
Let $M$ be a basic pseudo hoop. Then there is a linearly ordered set
$\Omega$ and a subsystem $\mbox{\rm M}(M)$ of $\mbox{\rm
Mon}(\Omega)$ such that $\mbox{\rm M}(M)$ is a sublattice of
$\mbox{\rm Mon}(\Omega)$ containing $e$ and each element of it is
residuated.  Moreover, $\mbox{\rm M}(M)$ can be converted into a
basic pseudo hoop where the operations are defined pointwise and is
isomorphic to $M$ with the $\odot$-operation corresponding to
composition of functions.
\end{theorem}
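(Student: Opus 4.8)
The plan is to realize $M$ as a subdirect product of the linearly ordered quotients $M/P$, $P$ ranging over prime filters, and then to glue these chains together into a single linearly ordered set on which $M$ acts by residuated monotone maps. First I would recall from Lemma~\ref{le:p4} that $f \le g$ in $M$ iff $Vf \le Vg$ for every value $V$; in particular, for each $a \in M \setminus \{1\}$ pick a value $V_a$, and for $a = 1$ take any fixed prime filter $V_1$. Let $\Lambda = \{V_a : a \in M\}$, and put $\Omega := \coprod_{P \in \Lambda} M/P$, the disjoint union, linearly ordered by choosing an arbitrary linear order on the index set $\Lambda$ and declaring that all of $M/P$ precedes all of $M/Q$ whenever $P$ strictly precedes $Q$ in that fixed order, while inside each block $M/P$ we use the linear order $\le_P$ from Remark~\ref{re:p3}(3). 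This makes $\Omega$ a chain.

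Next I would define, for each $a \in M$, the map $f_a : \Omega \to \Omega$ block-wise by $(Px)f_a := P(x \odot a)$ (the right-class of $x\odot a$). As recalled in the excerpt, $f_a$ is monotone on each block with residual $(Px)f_a^* = P(a \to x)$, so $f_a \in \mathrm{Mon}(\Omega)$ and is residuated; moreover $f_a$ preserves blocks, hence is monotone across blocks as well. Set $\mathrm{M}(M) := \{f_a : a \in M\}$. The key computations are then: (1) $a \mapsto f_a$ is injective, which is exactly Lemma~\ref{le:p4} applied to the family $\{V_a\}$, since $f_a = f_b$ forces $V_c(x\odot a) = V_c(x\odot b)$ for all $c,x$, in particular at $x = 1$ giving $V_c a = V_c b$ for all $c$, so $a = b$; (2) $f_a \circ f_b = f_{b \odot a}$, i.e. composition corresponds to $\odot$ (in the order matching the paper's convention), which is immediate from $((Px)f_b)f_a = P((x\odot b)\odot a)$; (3) the lattice operations are pointwise: $f_a \vee f_b = f_{a \vee b}$ and $f_a \wedge f_b = f_{a\wedge b}$, which follows from the fact, proved inside Lemma~\ref{le:p4}, that $V(f \wedge g) = Vf \wedge Vg$ and $V(f\vee g) = Vf \vee Vg$ in each block. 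Thus $\mathrm{M}(M)$ is a sublattice of $\mathrm{Mon}(\Omega)$ containing $e = f_1 = \mathrm{id}_\Omega$, each element residuated.

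Finally I would transport the pseudo-hoop structure. Define on $\mathrm{M}(M)$ the operations $f_a \odot f_b := f_a \circ f_b$ pulled back appropriately, and define the two residuals $f_a \to f_b$ and $f_a \squig f_b$ pointwise by the formulas suggested by the block behaviour, namely $(f_a \to f_b)$ acting on $Px$ as $P((a\to b)$-twisted$)$ — more precisely, since $M$ is a pseudo hoop one checks $f_{a\to b}$ and $f_{a\squig b}$ are the right objects because each $M/P$ is a linearly ordered pseudo hoop and the maps $f_a$ faithfully encode left translations. By construction all the defining identities (i)--(v) of a pseudo hoop hold in $\mathrm{M}(M)$ because they hold in $M$ and the assignment $a \mapsto f_a$ is a bijection respecting $\odot$, $\to$, $\squig$ and the lattice order; hence $\mathrm{M}(M)$ with these pointwise operations is a basic pseudo hoop isomorphic to $M$, with $\odot$ corresponding to composition.

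The main obstacle I anticipate is purely bookkeeping rather than conceptual: verifying that the pointwise-defined arrows $\to,\squig$ on $\mathrm{M}(M)$ genuinely coincide with $f_{a\to b}, f_{a\squig b}$ as honest elements of $\mathrm{Mon}(\Omega)$ — i.e. that applying the pointwise residuation formula to $f_a, f_b$ does not escape the subsystem $\mathrm{M}(M)$ — and that the linear order chosen on the index set $\Lambda$ does not interfere with monotonicity or with the residual identities. Both reduce, block by block, to the already-established facts that each $M/P$ is a linearly ordered pseudo hoop and that $f_a$ is block-preserving with block-wise residual $f_a^*$; so the only real work is checking the arrow identities on each block, which is routine since there the structure is a chain.
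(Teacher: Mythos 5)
Your proposal follows essentially the same route as the paper: fix a value $V_a$ for each $a<1$, glue the chains $M/V_a$ into one linearly ordered set by linearly ordering the index set, let $M$ act by right multiplication on right classes with residual $P(a\to x)$, use Lemma~\ref{le:p4} for injectivity and order-reflection, and transport the hoop operations along the bijection $a\mapsto f_a$. Your closing worry about whether $\to$ and $\squig$ are genuinely pointwise (rather than merely transported) is exactly the issue the paper itself leaves open as Question~1, so it is not a defect of your argument relative to the paper's.
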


\begin{proof}
Let  $\{V_g:g<1\}$ be a system of values, where $V_g$ is a fixed
value of $g<1.$ We define a mapping $\phi_g: M \to \mbox{\rm
Mon}(\Omega_g),$ where $\Omega_g=M/V_g,$ by
$$ (V_gx)\phi_g(a):= V_gx\odot a,\quad
V_gx\in \Omega_g\quad  (a\in M).
$$
Then (i) if $a\le b$, then $\phi_g(a)\le \phi_g(b)$, (ii)
$\phi_g(a)\circ \phi_g(b)=\phi_g(a\odot b),$ (iii) $\phi_g(a\vee
b)=\phi_g(a)\vee \phi_g(b),$ (iv) $\phi_g(a\wedge b)
=\phi_g(a)\wedge \phi_g(b).$ Let $M_0=\prod\{\mbox{\rm
Mon}(\Omega_g): g<1\}$ and order $M_0$ by coordinates. Define a
mapping $f: M\to M_0$ by
$$
f(a)=\{\phi_g(a): g<1\}, \quad a \in M.
$$
By Lemma \ref{le:p4}, $f(a) \le f(b)$ iff $a\le b$ and $f$ is
injective.

Let us totally order the elements of $M\setminus\{1\}$ by $\{g_t: t
\in T\},$ where $T$ is a totally ordered set. Let us set $\Omega_t:=
M/V_{g_t}$ and without loss of generality we can assume $ \Omega_s
\cap \Omega_t = \emptyset$ for all $s,t \in T$ such that $s\ne t$.
Let $\Omega = \bigcup_{t\in T}\Omega_t$, and define a partial order
$\preccurlyeq$ on $\Omega$ by $\omega_1 \preccurlyeq \omega_2$ iff
$\omega_1\in \Omega_s$ and $\omega_2 \in \Omega_t$ and $s <t$ or
$s=t$ and $\omega_1 \le \omega_2$ in $\Omega_s.$ Then $\Omega$ is
totally ordered with respect to $\preccurlyeq$.

Define a mapping $f_0:\ M \to \mbox{Mon}(\Omega)$ by: given $\omega
\in \Omega,$ there is  a unique $t \in T$ such that $\omega \in
\Omega_t.$ Let $(\omega)f_0(a) = (\omega)(\phi_{g_t})(a) \in
\Omega_t.$ Hence, if $a \in M$, then $f_0(a)|_{\Omega_t}$ maps
$\Omega_t$ into $\Omega_t$ for all $t \in T.$ Similarly as for $f$,
$f_0$ is injective and it maps  $M$ onto $\mbox{M}(M):=f_0(M).$ We
have  (i) $f_0(1)=\mbox{id}_\Omega =:e,$ (ii) $f_0(a)\le f_0(b)$ iff
$a\le b$, (iii) $f_0(a)\circ f_0(b) = f_0(a\odot b),$ (iv)
$f_0(a\vee b)= f_0(a)\vee f_0(b),$ (v) $f_0(a\wedge b)= f_0(a)\wedge
f_0(b).$ The residual of $f_0(a),$ $f_0^*(a),$ is defined as
follows: if $\omega \in \Omega_t$ then $\omega = V_{g_t}x$ for some
$x \in M$ and then we set $(\omega)f^*(a) = V_g(x\to a).$

Now we  endow $\mbox{M}(M)$ with the operations: $f_0(a)\odot f_0(b)
:= f_0(a)\circ f_0(b) = f_0(a\odot b)$ and $f_0(a)\to
f_0(b):=f_0(a\to b)$ and $f_0(a)\squig f_0(b):= f_0(a\squig b)$ for
all $a,b \in M.$ Then $\mbox{M}(M)$ is a basic pseudo hoop that is
an isomorphic image of $M$ under the isomorphism $a\mapsto f_0(a),$
$a\in M.$
\end{proof}

\vspace{2mm}

{\bf Question 1.} How we can define $\to$ and $\squig $ in Theorem
\ref{th:Hol} to be defined by points~? We recall that in
\cite{Dvu5}, we have a representation of pseudo MV-algebras by
automorphisms defined on a linearly ordered sets where all
operations, $\odot, \to, \squig$ are defined by points.

\section{Normal-Valued Basic Pseudo Hoops}

This is the main part of the this article, where we will study
normal-valued basic pseudo hoops. In particular, we present a
countable system of equations which completely characterize them.

Given $f \in M,$ we define the left and right conjugates,
$\lambda_f$ and $\rho_f,$ of $x\in M$ by $f$ as follows
$$
\lambda_f(x):= f\squig (x\odot f), \quad \rho_f(x):= f\to (f\odot x).
$$
Then a filter $V$ is normal iff $\lambda_f(V) \subseteq V$ and
$\rho_f(V)\subseteq V$ for any $f \in M.$

By \cite[Lem 5.2]{BlTs},
$$
\lambda_f(x\odot y)\le \lambda_f(x) \odot \lambda_f(y),\quad
\rho_f(x\odot y)\le \rho_f(x)\odot \rho_f(y)
$$ for all $x,y \in M.$

Let $V$ be a filter and $f \in M.$ We define
\begin{align*}
f^{-1}Vf&:=\{f\squig (v\odot f): v \in V\}= \lambda_f(V),\\
fVf^{-1}&:=\{f\to (f\odot v): v \in V\}=\rho_f(V).
\end{align*}

Then  a value $V$ of a basic pseudo hoop is normal in $V^*$ iff
$Vf=fV$ for each $f \in V^*$ iff $f^{-1}Vf\subseteq V$ and $fVf^{-1}
\subseteq V$ for each $f \in V^*.$  We say that a basic pseudo-hoop
$M$ is \textit{normal-valued} if every value $V$ of $M$ is normal in
its cover $V^*.$

According to Wolfenstein, \cite[Thm 41.1]{Dar}, an $\ell$-group $G$
is normal-valued iff  every $a,b \in G^-$ satisfy $b^2a^2 \le ab$,
or in our language
$$
b^2\odot a^2 \le a\odot b.\eqno(6.1)
$$
Hence, every cancellative pseudo hoop $M$ is normal-valued iff (6.1)
holds for all $a,b \in M.$  Moreover, every representable pseudo
hoop satisfies (6.1).

Similarly, a pseudo MV-algebra is normal-valued iff (6.1) holds, see
\cite[Thm 6.7]{Dvu2}.

If (6.1) holds in a pseudo hoop $M,$ then given $n\ge 1$ there is an
integer $k_n \ge 1$ such that for all $a,b\in M$
$$
(a\odot b)^n \ge a^{k_n}\odot b^{k_n}.\eqno(6.2)
$$
Indeed, by induction, we have $(a \odot b)^{n+1} = (a\odot b)^n\odot
a\odot b\ge a^{k_n}\odot b^{k_n} \odot a \odot b\ge a^{k_n+2}\odot
b^{2k_n+1}\ge a^{2k_n+2}\odot b^{2k_n+2}.$

If $A,B$ are two subsets of $M,$ we denote by $A\odot B=\{a\odot b:
a\in A,\ b \in B\}.$

\begin{proposition}\label{pr:p5}
Let $M$ be a pseudo hoop.  Then  {\rm (i)} implies {\rm (ii)}, and
{\rm (ii)} and {\rm (iii)} are equivalent, where
\begin{enumerate}

\item[{\rm (i)}] Condition {\rm (6.1)} holds.

\item[{\rm (ii)}] $F(a)\odot F(b) = F(a\odot
b)=F(b\odot a)=F(b) \odot F(a)$ for $a,b \in M.$

\item[{\rm (iii)}] $F\odot G=F\vee G=G\odot F$ for all
filters $F,G \in \mathcal F(M).$

\end{enumerate}

\end{proposition}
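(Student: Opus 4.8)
The plan is to establish the three implications (i)$\Rightarrow$(ii), (ii)$\Rightarrow$(iii), and (iii)$\Rightarrow$(ii) in turn, using (6.1), (6.2), the formula (4.1), and the description of $F\vee G$ from Proposition~\ref{pr:p1}. The equality $F(a\odot b)=F(a)\vee F(b)=F(b\odot a)$ in (4.1) is available for free, so the content of (ii) is really the two equalities $F(a)\odot F(b)=F(a\odot b)$ and $F(b)\odot F(a)=F(b\odot a)$; equivalently, that the set product $F(a)\odot F(b)$ is already a filter and coincides with the join.

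For (i)$\Rightarrow$(ii): the inclusion $F(a)\odot F(b)\subseteq F(a\odot b)=F(a)\vee F(b)$ is clear since $F(a\odot b)$ is a filter containing both $F(a)$ and $F(b)$ (note $a=a\odot 1\ge a\odot b$ is false in general, so one argues instead that any $u\in F(a)$, $u\ge a^m$, and $v\in F(b)$, $v\ge b^k$, give $u\odot v\ge a^m\odot b^k\ge(a\odot b)^{\max(m,k)}$ after first raising both to a common power and invoking (6.2)). For the reverse inclusion, take $x\in F(a\odot b)$, so $x\ge(a\odot b)^n$ for some $n$; by (6.2) there is $k_n$ with $(a\odot b)^n\ge a^{k_n}\odot b^{k_n}$, and since $M$ satisfies (RDP) by Theorem~\ref{th:3.1}, from $x\ge a^{k_n}\odot b^{k_n}$ we get $x=b_1\odot c_1$ with $b_1\ge a^{k_n}\in F(a)$ and $c_1\ge b^{k_n}\in F(b)$; wait---(RDP) as stated splits $x\ge b\odot c$ as $x=b_1\odot c_1$ with $b_1\ge b,\ c_1\ge c$, so here $x=b_1\odot c_1$ with $b_1\ge a^{k_n}$, hence $b_1\in F(a)$, and $c_1\ge b^{k_n}$, hence $c_1\in F(b)$, giving $x\in F(a)\odot F(b)$. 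Thus $F(a)\odot F(b)=F(a\odot b)$; symmetrically $F(b)\odot F(a)=F(b\odot a)$, and these all equal $F(a)\vee F(b)$ by (4.1).

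For (ii)$\Rightarrow$(iii): given filters $F,G$, I would write $F=\bigvee_{a\in F}F(a)$ and $G=\bigvee_{b\in G}F(b)$, so $F\vee G=\bigvee_{a\in F,\,b\in G}(F(a)\vee F(b))$. An element $x\in F\vee G$ is, by the description in Proposition~\ref{pr:p1}, bounded below by a product $f_1\odot\cdots\odot f_n$ of elements from $F\cup G$; collecting the $F$-factors into $f\in F$ and the $G$-factors into $g\in G$ and using (RDP) repeatedly to rearrange---or more cleanly, using (ii) to see $F(f)\odot F(g)=F(f\odot g)\ni x$ after noting $x\ge(f\odot g')^m$ for a suitable rearrangement---one shows $x\in F\odot G$. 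The reverse inclusion $F\odot G\subseteq F\vee G$ is immediate. The symmetric argument gives $G\odot F=F\vee G$. The main obstacle here is the bookkeeping of mixed products of $F$- and $G$-elements: one needs that a product like $f_1\odot g_1\odot f_2\odot g_2$ lies above $f\odot g$ for some $f\in F$, $g\in G$, which follows by pushing the single mixed inversion through (6.1)-type inequalities, or directly from (ii) applied to the subfilters generated by the partial products. For (iii)$\Rightarrow$(ii): specialize (iii) to the principal filters $F=F(a)$, $G=F(b)$, and combine with (4.1) to read off exactly (ii). I expect the (ii)$\Rightarrow$(iii) direction, specifically the reduction of a general join element to a single $F\odot G$ product, to be the technical heart; everything else is a direct application of (RDP), (6.2), and (4.1).
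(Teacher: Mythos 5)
Your proposal follows essentially the same route as the paper: (i)$\Rightarrow$(ii) via (6.2) plus (RDP), (ii)$\Rightarrow$(iii) by writing an element of $F\vee G$ as an actual product via (RDP) and then using (ii) on principal filters to commute and collect the factors, and (iii)$\Rightarrow$(ii) by specializing to principal filters and invoking (4.1). Two local corrections, neither fatal. First, in every pseudo hoop one has $a\odot b\le a$ and $a\odot b\le b$ (integrality), so your parenthetical worry that ``$a\ge a\odot b$ is false in general'' is mistaken; worse, the substitute inequality $a^m\odot b^k\ge(a\odot b)^{\max(m,k)}$ does not follow from (6.2) --- (6.2) asserts $(a\odot b)^n\ge a^{k_n}\odot b^{k_n}$, the opposite comparison --- and already for $m=k=2$ it reduces, in the negative cone of an $\ell$-group, to $a\odot b\ge b\odot a$, which fails in general. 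The justification you stated first (that $F(a\odot b)=F(a)\vee F(b)$ is a filter containing $F(a)\cup F(b)$, hence closed under $\odot$) is the correct one and is what the paper uses. Second, in (ii)$\Rightarrow$(iii) the rearrangement of a mixed product $f_1\odot g_1\odot\cdots\odot f_n\odot g_n$ cannot appeal to ``(6.1)-type inequalities'', since (6.1) is not a hypothesis of that implication; your alternative --- applying (ii) to the principal filters $F(f_i)$, $F(g_j)$ so that the set product becomes a join, permuting the join, and converting back to a product $F(f_1\odot\cdots\odot f_n)\odot F(g_1\odot\cdots\odot g_n)\subseteq F\odot G$ --- is exactly the paper's argument and is the right way to close the ``bookkeeping'' step you identify as the technical heart.
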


\begin{proof} (i) $\Rightarrow$ (ii).   Let $x \in F(a\odot
b).$  There exists $n\ge 1$ and $k_n\ge 1$ such $x \ge (a\odot b)^n
\ge a^{k_n}\odot b^{k_n}.$  (RDP) yields that $x=a_1 \odot b_1$
where $a_1 \ge a^{k_n}$ and $b_1 \ge b^{k_n}$ so that $x = a_1\odot
b_1 \in F(a)\odot F(b).$

Conversely, let $x \in F(a)\odot F(b).$ Then $x = a_1 \odot b_1$ for
some $a_1\in F(a)$ and $b_1\in F(b).$ But then $x \in F(a)\vee
F(b)=F(a\odot b)$ when we have used (4.1). Similarly, $F(b)\odot
F(a)= F(b\odot a).$

(ii) $\Rightarrow$ (iii). It is clear that $F\odot G \subseteq F\vee
G.$  Now take  $x \in F\vee G.$  Then $x \ge a_1\odot b_1 \odot
\cdots \odot a_n\odot b_n$ where $a_i \in F$ and $b_i \in G.$ (RDP)
yields $x = a_1^0\odot b_1^0 \odot \cdots\odot a_n^0\odot b_n^0$ for
$a_i^0 \ge a_i$ and $b^0_i \ge b_i.$  Then $x \in F(a_1)\odot
F(b_1)\odot \cdots \odot F(a_n)\odot F(b_n) = F(a_1)\vee F(b_1)\vee
\cdots \vee F(a_n)\vee F(b_n) = F(a_1)\vee \cdots \vee F(a_n)\vee
F(b_1)\vee \cdots \vee F(b_n) = F(a_1\odot \cdots \odot a_n)\vee
F(b_1\odot \cdots \odot b_n) \subseteq F\vee G.$

(iii) $\Rightarrow$ (ii). We have $F(a)\odot F(b) = F(a)\vee F(b).$
(4.1) entails $ F(a\odot b)=F(a)\vee F(b)=F(b\odot a).$
\end{proof}

\begin{lemma}\label{bot1}
Let $M$ be a basic pseudo hoop. Then, for any $X\subseteq M,$  the
set $X^\perp = \{x :  \ x\vee a=1 \ \forall a\in X\}$ is a filter of
$M.$
\end{lemma}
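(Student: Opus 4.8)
The plan is to verify the two filter axioms directly for $X^\perp$, using the fact (established earlier) that a basic pseudo hoop is a distributive lattice with prelinearity, and that in such a structure $\odot$ distributes over both $\wedge$ and $\vee$ from either side (Proposition \ref{pr:2.1} and the remark before Proposition \ref{pr:2.1} on distributivity of $\odot$ over existing joins).

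First I would check upward closure: if $x \in X^\perp$ and $x \le y$, then for each $a \in X$ we have $1 = x \vee a \le y \vee a \le 1$, so $y \vee a = 1$ and hence $y \in X^\perp$. This also shows $1 \in X^\perp$ (and if $X^\perp$ were empty we would be in trouble, but $1 \vee a = 1$ always, so $1 \in X^\perp$ and the set is nonempty). Note the join $y \vee a$ exists because $M$ is a lattice (basic, hence a distributive lattice by \cite[Prop 4.6]{GLP}), so there is no definedness issue.

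The main content is closure under $\odot$: given $x, y \in X^\perp$, I must show $(x \odot y) \vee a = 1$ for every $a \in X$. The key computation uses that $\odot$ distributes over $\wedge$ from both sides. Fix $a \in X$. We know $x \vee a = 1$ and $y \vee a = 1$. The natural move is to use the inequality $(x \odot y) \vee a \ge (x \vee a) \odot (y \vee a)$ — indeed $x \odot y \le (x\vee a)\odot(y\vee a)$ is false in general, so instead I would argue: since $x \vee a = 1$, multiplying on the right by $y$ and using distributivity of $\odot$ over $\vee$ gives $y = 1 \odot y = (x \vee a)\odot y = (x\odot y) \vee (a \odot y)$, so $(x\odot y)\vee(a\odot y) = y$. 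Hence $(x \odot y) \vee a \ge (x\odot y)\vee(a\odot y)\vee\bigl((x\odot y)\vee a\bigr)\cdots$ — let me instead take joins with $a$: from $(x\odot y)\vee(a\odot y) = y$ we get $\bigl((x\odot y)\vee a\bigr) \vee (a\odot y) \ge (x\odot y)\vee(a\odot y) = y$, and also $(x\odot y)\vee a \ge a \ge a \odot y$, so $(x \odot y)\vee a \ge y \vee a = 1$. Wait — I need $(x\odot y)\vee a \ge a\odot y$, which holds since $a \odot y \le a$ (as $y \le 1$), giving $(x\odot y)\vee a \ge (x\odot y)\vee(a\odot y)\vee a = y \vee a = 1$. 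So $(x\odot y)\vee a = 1$.

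Thus the hard part is purely the $\odot$-closure step, and the obstacle there is routing the argument through the distributivity of $\odot$ over $\vee$ (from the remark citing \cite[Lem 2.6]{GLP}) together with the elementary fact $a \odot y \le a$; once those are in hand the computation is a short chain of inequalities with joins, all of which exist because $M$ is a lattice. I would present the two axioms (upward closure including $1 \in X^\perp$, and closure under $\odot$) as the two halves of the proof, with the $\odot$-closure computation written out as the displayed chain above.
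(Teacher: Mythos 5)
Your proof is correct and follows essentially the same route as the paper's: the paper computes $(x\odot y)\vee a=(x\odot y)\vee(x\odot a)\vee a=(x\odot(y\vee a))\vee a=x\vee a=1$ using distributivity of $\odot$ over $\vee$ together with $x\odot a\le a$, while you run the mirror-image computation $(x\vee a)\odot y=(x\odot y)\vee(a\odot y)=y$ and then join with $a$. The detour in your write-up (dismissing $a\vee(x\odot y)\ge(a\vee x)\odot(a\vee y)$, which in fact the paper records as an easy valid inequality and which would also finish the argument immediately) is harmless, since your final chain of inequalities is sound.
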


\begin{proof}
The set $X^\perp$ is clearly closed with respect  to upper bounds.
Let  $x,y\in X^\perp$ and  $a\in X$. The equalities $x\vee a = y\vee
a = 1$ hold. Now we can compute: $(x\odot y) \vee a = (x\odot y)\vee
(x\odot a)\vee a = (x\odot(y\vee a))\vee a = (x\odot 1) \vee a =1$.
Thus also $x\odot y \in X^\perp$.
\end{proof}

\begin{lemma}\label{bot2}
Let $M$ be a basic pseudo hoop with a strong unit $u\in M$. Then the
inclusion

$$
\bigcap \mathrm{Val}(u)\subseteq \{a:  a^n\geq u\ \mbox{\rm for
all}\ n\in \mathbb{N}\}
$$
holds.
\end{lemma}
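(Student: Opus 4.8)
The plan is to prove the contrapositive by contradiction, after one preliminary observation. Since $u$ is a strong unit, $\mathrm{Val}(u)$ coincides with the set of maximal filters of $M$: if $V$ is a value of $u$, its cover $V^{*}$ contains $u$, hence contains $F(u)=M$, so $V^{*}=M$ and $V$ is a maximal filter; conversely any maximal filter $W$ must omit $u$ (otherwise $F(u)=M\subseteq W$), and is therefore automatically maximal among the filters omitting $u$, i.e. $W\in\mathrm{Val}(u)$. In particular every maximal filter of $M$ lies in $\mathrm{Val}(u)$, so an element of $\bigcap\mathrm{Val}(u)$ belongs to every maximal filter of $M$. (We may assume $M\neq\{1\}$, the remaining case being vacuous.)

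Now suppose $a\in\bigcap\mathrm{Val}(u)$ but $a^{n}\not\ge u$ for some $n\ge 1$. Then $u\to a^{n}\neq 1$, so by Lemma~\ref{le:p3} (cf. Remark~\ref{re:p3}(1)) the element $u\to a^{n}$ has a value $V$, and $V$ is prime, hence satisfies the criterion in Proposition~\ref{pr:p2}(iii). Applying that criterion to the pair $u,\,a^{n}$ gives $u\to a^{n}\in V$ or $a^{n}\to u\in V$; since $u\to a^{n}\notin V$ by the choice of $V$, we must have $a^{n}\to u\in V$. Moreover $u\notin V$, for otherwise $F(u)=M\subseteq V$ and then $u\to a^{n}\in V$, a contradiction.

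Since $u\to a^{n}\notin V$, the filter $V$ is proper; and the union of any chain of proper filters again omits the strong unit $u$ and hence is proper, so Zorn's Lemma extends $V$ to a maximal filter $W$. By the first step $W\in\mathrm{Val}(u)$, so $u\notin W$, whereas $a\in W$ because $a\in\bigcap\mathrm{Val}(u)$; consequently $a^{n}\in W$. But also $a^{n}\to u\in V\subseteq W$, so the deductive-system property of $W$ forces $u\in W$, contradicting $u\notin W$. Therefore no such $n$ exists, i.e. $a^{n}\ge u$ for all $n$, which is the asserted inclusion.

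The only step that needs a genuine (if brief) argument is the identification of $\mathrm{Val}(u)$ with the maximal filters; everything else is routine use of the filter axioms. The point is that the inequality $u\to a^{n}\neq 1$, witnessed inside a prime value $V$, persists along any extension of $V$ to a maximal filter $W$: there the hypothesis on $a$ drags $a^{n}$ and $a^{n}\to u$ into $W$, and one instance of modus ponens then puts $u\in W$ --- which is absurd.
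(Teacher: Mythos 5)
Your proof is correct, and it reaches the same endgame as the paper's --- a value of $u$ that contains $a^n\to u$, into which the hypothesis $a\in\bigcap\mathrm{Val}(u)$ drags $a^n$, whereupon one application of modus ponens forces the absurdity $u\in V$ --- but you construct that value by a different route. The paper starts from the polar $\{u\to a^n\}^\perp$, which is a filter by Lemma \ref{bot1}; prelinearity (in the form $(u\to a^n)\vee(a^n\to u)=1$) puts $a^n\to u$ into this polar, the strong-unit hypothesis shows the polar omits $u$, and Zorn's Lemma then extends it directly to a value of $u$. You instead take a value $V$ of the element $u\to a^n$ (Lemma \ref{le:p3}, Remark \ref{re:p3}(1)), invoke the primeness criterion of Proposition \ref{pr:p2}(iii) to place $a^n\to u$ in $V$, and then extend $V$ to a maximal filter, which you correctly identify with a value of $u$ via the observation that for a strong unit $\mathrm{Val}(u)$ is exactly the set of maximal filters of $M$ (a sharpening of Remark \ref{re:p3}(5)). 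Both arguments are sound and both ultimately rest on prelinearity and Zorn; yours leans on the prime-filter machinery where the paper needs only the elementary fact that polars are filters, but in exchange your identification of $\mathrm{Val}(u)$ with the maximal filters isolates exactly where the strong-unit hypothesis enters and makes the final contradiction completely transparent.
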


\begin{proof}
Let $a\in M$ be such an element that there is an integer $n\in
\mathbb{N}$ with the property $a^n \not\geq u$. Thus the inequality
$u\rightarrow a^n< 1$ holds and the filter $\{u\rightarrow
a^n\}^\perp$ is nontrivial (more precisely $u\rightarrow
a^n\not\in\{u\rightarrow a^n\}^\perp$). Prelinearity yields
$a^n\rightarrow u\in \{u\rightarrow a^n\}^\perp$. Because $u$ is a
strong unit and $\{u\rightarrow a^n \}^\perp$ is nontrivial, also
$u\not\in \{u\rightarrow a^n\}^\perp$ holds. Due to Zorn's Lemma,
there is a value $V\in \mathrm{Val }(u)$ such that $\{u\rightarrow
a^n\}^\perp\subseteq V$.

Let us assume to contrary that $a\in V$. Clearly also $a^n,
a^n\rightarrow u\in V$ which gives $(a^n\rightarrow u)\odot a^n \leq
u\in V$ which is a contradiction. Finally, $a\not\in V\supseteq
\bigcap \mathrm{Val} (u)$ and this finishes the proof.
\end{proof}

We recall the following folklore result on prime filters.

\begin{remark}\label{bot3.4}  Let $M$ be a basic pseudo hoop.
Then $$ \bigcap \{F: F\ \mbox{is a minimal prime filter}\}=\{1\}.
$$
\end{remark}

\begin{proof} If $x \in M\setminus\{1\},$ then $\Val(x) \ne
\emptyset$ and any $V \in \Val(x)$ contains a minimal prime filter
$V_M$. This yields $x \notin V \supseteq V_M \supseteq \bigcap \{F:
F$ is a minimal prime filter$\}.$
\end{proof}

\begin{lemma}\label{bot4}
Let $M$ be a basic pseudo hoop and $a,b,x\in M$ be such that
$V(a\odot b)\leq Vx$ for any $V\in\mathrm{Val} (x).$ Then $a^2\odot
b^2\leq x$.
\end{lemma}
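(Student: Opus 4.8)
The plan is to reduce the global inequality $a^2\odot b^2\le x$ to a statement about a single carefully chosen filter, namely a value of $x$, and then exploit normal-type behaviour of prime filters together with the Riesz Decomposition Property. First I would observe that, by Lemma~\ref{le:p4}, it suffices to check $V'(a^2\odot b^2)\le V'x$ for a single fixed value $V'$ of each element of $M$; equivalently, since the statement to be proved is $a^2\odot b^2\le x$, and $x<1$ is the only interesting case, it is enough to show that $V(a^2\odot b^2)\le Vx$ already for every value $V\in\Val(x)$ implies the inequality outright. So the real content is to pass from the hypothesis $V(a\odot b)\le Vx$ (for all $V\in\Val(x)$) to $a^2\odot b^2\le x$.

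Next I would fix a value $V$ of $x$ and work inside the prime filter $V$ and its cover $V^*$. The hypothesis $V(a\odot b)\le Vx$ means there is $c\in V$ with $c\odot(a\odot b)\le x$, i.e. $c\le (a\odot b)\to x$. The goal is to produce the \emph{pointwise} inequality $a^2\odot b^2\le x$, so I want to leverage the square to ``absorb'' the conjugating element $c$. The key algebraic tool is inequality (6.2)/(6.1)-style manipulation: from $c\odot(a\odot b)\le x$ I would like to derive something like $(a\odot b)^2\le$ (a conjugate of $x$ by $c$) and then use that $V$ being prime, combined with the root-system structure of $\mathcal P(M)$ (Remark~\ref{re:p3}(4)) and the fact that $a\odot b$ either lies in $V$ or is dominated by $x$ relative to $V$. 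Concretely: if $a\odot b\in V$, then since $V$ is closed under $\odot$ we get $(a\odot b)^2\in V$; and if $a\odot b\notin V$... — here one splits into cases according to whether $a\in V$, $b\in V$, using primeness of $V$ to linearly order the quotient $\{Vy: y\in M\}$ (Remark~\ref{re:p3}(3)). In each case one bounds $Va^2$, $Vb^2$ by $Vx$ directly, using $V(a\odot b)\le Vx$ together with the semilattice identities $V(a\odot b)=Va\wedge\cdots$ style reasoning from the proof of Lemma~\ref{le:p4}.

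The step I expect to be the main obstacle is exactly this case analysis and the passage from the quotient inequality to the pointwise one: $V x$ is generally strictly below $Vy$ for elements $y$ not in $V$, but $Vx$-comparisons only give inequalities modulo $V$, and to conclude $a^2\odot b^2\le x$ on the nose I must eliminate the conjugator $c\in V$. The trick will be that squaring exponents is precisely what (6.1) buys: if $c\odot(a\odot b)\le x$ and one can also arrange (from primeness, distinguishing whether $c$ or $a\odot b$ ``wins'' in the linearly ordered quotient $M/V'$ for a value $V'$ of the relevant auxiliary element) that $a\odot b\le c$ or $c\le$ some power, then $a^2\odot b^2\le (a\odot b)^2$ or a suitable rearrangement collapses to $\le x$. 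I would also keep in mind Lemma~\ref{bot2} and Lemma~\ref{bot1}: the orthogonal filters $\{z\}^\perp$ and the description $\bigcap\Val(u)\subseteq\{a: a^n\ge u\ \forall n\}$ are likely the mechanism for turning ``$a^2\odot b^2\le x$ fails'' into ``some value of $x$ violates $V(a\odot b)\le Vx$'', i.e. I would in fact argue by contraposition: assume $a^2\odot b^2\not\le x$, so $(a^2\odot b^2)\to x<1$, take a value of this element, and derive a value $V$ of $x$ with $V(a\odot b)\not\le Vx$, contradicting the hypothesis.

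Thus the outline is: (1) reduce to contraposition, assuming $(a^2\odot b^2)\to x<1$; (2) pick a value $V$ of $(a^2\odot b^2)\to x$, which is prime and by Lemma~\ref{le:p3} exists, and note $(a^2\odot b^2)\to x\notin V$ while by prelinearity $x\to(a^2\odot b^2)\in V$ — actually one wants a value of $x$, so take instead $V\in\Val(x)$ obtained via $\{u\to\cdots\}^\perp$-type arguments as in Lemma~\ref{bot2}, sitting above an appropriate orthogonal filter so that $a^2\odot b^2\notin V$ forces $x\notin V$; (3) in the linearly ordered quotient $\{Vy\}$, deduce that $Va$ and $Vb$ are both $>Vx$ is impossible, hence $V(a\odot b)=Va\odot Vb>Vx$ fails only if... — and here run the squaring argument to contradict $V(a\odot b)\le Vx$. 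The delicate bookkeeping is choosing $V$ so that all three of $a^2$, $b^2$, $x$ interact correctly, and that is where I would spend the most care.
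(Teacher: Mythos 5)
There is a genuine gap: you have correctly located the territory (contraposition, minimal primes or values of $x$, primeness of the quotient, prelinearity), but the one idea that actually makes the lemma work is missing, and the substitute you offer for it is not available. The heart of the matter is how the extra factors in $a^2\odot b^2$ are consumed. The paper's proof shows that $(a^2\odot b^2)\to x$ lies in \emph{every} minimal prime filter $F$ (whence it equals $1$ by the folklore fact that the minimal primes intersect trivially): when $x\notin F$ one extends $F$ to a value $V\in\mathrm{Val}(x)$ and splits on whether $a\in V$. If $a\notin V$, then $(a\odot b^2)\to x\in V$ (from $V(a\odot b^2)\le V(a\odot b)\le Vx$), so $((a\odot b^2)\to x)\to a\notin V\supseteq F$, and primeness/prelinearity of $F$ forces the other disjunct $a\to((a\odot b^2)\to x)=(a^2\odot b^2)\to x$ into $F$; the case $a\in V$ is handled symmetrically with $b$, using $b\notin V$ and the element $(b\to x)\to(a^2\odot b)$. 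It is exactly this use of the identity $(u\odot v)\to z=u\to(v\to z)$ together with the prime dichotomy that ``absorbs'' one spare copy of $a$ (resp.\ of $b$), and your sketch stops with a dash precisely where this argument should go.

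The two concrete mechanisms you do name would not carry you through. First, you repeatedly appeal to inequality (6.1) ($x^2\odot y^2\le y\odot x$) to ``absorb the conjugator,'' but Lemma~\ref{bot4} is stated for an arbitrary basic pseudo hoop with no normal-valuedness hypothesis; (6.1) is the very condition the surrounding section is trying to establish, so invoking it here is both unjustified and circular (Lemma~\ref{bot4} is used to \emph{prove} Theorem~\ref{th:bot5}(i)). Second, the proposed reduction via Lemma~\ref{le:p4} does not close: that lemma requires $V(a^2\odot b^2)\le Vx$ for values $V$ of \emph{all} elements of $M$, whereas your hypothesis only controls $V\in\mathrm{Val}(x)$; the bridge from arbitrary (minimal) primes to values of $x$ is exactly the step ``if $x\in F$ we are done, otherwise extend $F$ to a value of $x$,'' which needs to be said and then followed by the case analysis above. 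The auxiliary tools you flag (Lemmas~\ref{bot1} and~\ref{bot2}, orthogonal filters) play no role here and would not replace the missing step.
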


\begin{proof}
We are going to prove that $(a^2\odot b^2)\rightarrow x$ belongs to
any minimal prime filter $F$. Let $F$ be a minimal prime filter. If
$x\in F,$ then clearly $(a^2\odot b^2)\rightarrow x\in F$.

We suppose that $x\not\in F$. Thus there exists a value $V\in
\mathrm{Val} (x)$ such that $F\subseteq V$. There are two cases:

(i) Let $a\not\in V$. Clearly, $V(a\odot b^2)\leq V(a\odot b)\leq
Vx$. Hence, $(a\odot b^2)\rightarrow x\in V$ holds. Because
$a\not\in V$ also $((a\odot b^2)\rightarrow x)\rightarrow a \not\in
V$ and, moreover, $((a\odot b^2)\rightarrow x)\rightarrow a \not\in
F$. Prelinearity of $\mathit M$ gives $(a^2\odot b^2)\rightarrow x =
a\rightarrow ((a\odot b^2)\rightarrow x)\in F$.

(ii) Let $a\in V$. We can compute $Vb = V(a\odot b)\leq Vx$ and thus
$b\rightarrow x\in V$. We assert that $b \notin V,$ otherwise, $V1
=V(a\odot b) \le Vx$ yields $x=1\to x \in V,$ which is absurd.
Therefore also $a^2\odot b \not\in V$. Altogether $(b\rightarrow
x)\rightarrow (a^2\odot b)\not\in V$ and consequently $(b\rightarrow
x)\rightarrow (a^2\odot b)\not\in F$. Analogously to the previous
part, prelinearity gives $(a^2\odot b)\rightarrow (b\rightarrow x) =
(a^2\odot b^2)\rightarrow x\in F$.

We have shown that $(a^2\odot b^2)\rightarrow x$ belongs to any
minimal prime filter. Due to Remark \ref{bot3.4}, we obtain
$(a^2\odot b^2)\rightarrow x=1$ and $a^2\odot b^2\leq x$.
\end{proof}

We recall that  a pseudo hoop $M$ is \textit{simple} if it contains
a unique proper filter.

\begin{theorem}\label{th:bot5}
Let $M$ be a  normal-valued basic pseudo hoop, then the following
inequalities hold.
\begin{itemize}
\item[{\rm (i)}] $x^2\odot y^2\leq y\odot x.$
\item[{\rm (ii)}]
$((x\rightarrow y)^n\squig y)^2\leq (x\squig y)^{2n}\to y$ for any
$n\in \mathbb N$.
\item[{\rm (iii)}]
$((x\squig y)^n\to y)^2\leq (x\to y)^{2n}\squig y$ for any $n\in
\mathbb N$.
\end{itemize}
\end{theorem}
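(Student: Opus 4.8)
The plan is to deduce all three inequalities from Lemma \ref{bot4}, so the work reduces to checking, for each inequality, that the displayed relation holds at every value of the right-hand side; in each case the normal-valuedness of $M$ is used at exactly one point, namely to pass from a value $V$ (of the right-hand side) to the statement that the conjugate by a suitable element $f\in V^*$ maps $V$ into itself. Throughout I work with the right classes $M/V$ (and the left classes, which behave dually), using that $V$ is prime so $M/V$ is linearly ordered (Remark \ref{re:p3}(3)), and that for any value $V$ and any $f$ the residual relations $Vf\le Vg \iff f\to g\in V$ hold.

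For (i): fix $x,y\in M$; by Lemma \ref{bot4} with $a=x$, $b=y$, $c$... rather with the target element $y\odot x$, it suffices to show $V(x\odot y)\le V(y\odot x)$ for every $V\in\Val(y\odot x)$. In $M/V$, linearity gives either $Vx\le Vy$ or $Vy\le Vx$. If $Vy\le Vx$, there is $c\in V$ with $c\odot y\le x$. One then needs $V(x\odot y)\le V(y\odot x)$, which will follow by a conjugation computation: since $y\odot x\notin V$ but $V^*=V\vee F(y\odot x)$, the elements $x$ and $y$ lie in $V^*$, so normality of $V$ in $V^*$ gives $Vx=xV$ and $Vy=yV$, and hence in $M/V$ the products reorder, $V(x\odot y)=V(y\odot x)$. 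The symmetric case is the same with the roles of $x,y$ swapped. So (i) reduces to this reordering-at-a-normal-value argument.

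For (ii) and (iii), which are dual to each other (swap $\to$ with $\squig$, left classes with right classes), I will again apply Lemma \ref{bot4} to the inequality's right-hand side. For (ii), set $u := (x\to y)^n$ and $w:=(x\squig y)^n$, and the target is $(w^2\to y)\ge (u\squig y)^2$, i.e. one wants $a^2\odot b^2\le (w^2\to y)$ with an appropriate reading; more directly one shows $V\big((u\squig y)^2\big)\le V(w^2\to y)$ for every $V\in\Val(w^2\to y)$. Fix such a $V$. If $w^2\to y\in V$ there is nothing to prove (it is $V1$, the top); so assume $w^2\to y\notin V$, i.e. $V(w^2)\not\le Vy$, so by linearity $Vy< Vw$, hence $Vy<V(x\squig y)^n$, which forces $Vy<V(x\squig y)$ and $Vy<V(x\to y)$ (an $n$-th power above $Vy$ in a chain forces the base above it, using cancellativity-style reasoning in $M/V$ via the residual characterization). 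Then $x,y$ generate a piece of $M/V$ on which everything reorders; the key point is that the element $w^2\to y$ is a strong-unit-type witness in $V^*$, so $V$ is normal in $V^*$ and conjugation by $x\to y$, $x\squig y$ (which lie in $V^*$) is an inner map preserving $V$. Using $\lambda_f(x\odot y)\le\lambda_f(x)\odot\lambda_f(y)$ and $\rho_f(x\odot y)\le\rho_f(x)\odot\rho_f(y)$ together with these, one converts powers of $u\squig y$ into powers of $w\to y$ inside $M/V$, picking up the factor $2$ in the exponent, and obtains $V\big((u\squig y)^2\big)\le V(w^2\to y)$. Part (iii) is verbatim dual.

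The main obstacle is the middle step in (ii) and (iii): translating the conjugate-submultiplicativity inequalities $\lambda_f(x\odot y)\le\lambda_f(x)\odot\lambda_f(y)$ into the exact exponent bookkeeping that produces $(\,\cdot\,)^2$ against $(\,\cdot\,)^{2n}$, while staying inside the quotient $M/V$ where the arrows are only partially controlled. One must be careful that $\to$ and $\squig$ do not commute, so the left classes $xV$ and right classes $Vx$ must be tracked separately, and the identity $Vf=fV$ (normality) is what glues the two sides; getting the indices to line up — in particular that $n$ applications of a conjugate, each roughly doubling, combined with one extra $\odot$-factor, land exactly at $2n$ and at square on the left — is the delicate part. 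Once that lemma-level inequality inside $M/V$ is secured, the passage to the global inequality is immediate from Lemma \ref{bot4} and the fact (Remark \ref{bot3.4}) that intersecting all minimal prime filters gives $\{1\}$, exactly as in the proof of Lemma \ref{bot4}.
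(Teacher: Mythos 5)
Your overall strategy for (i) --- reduce to Lemma \ref{bot4} and check $V(x\odot y)\le V(y\odot x)$ at every $V\in\mathrm{Val}(y\odot x)$ --- matches the paper, but the step where you justify the reordering is wrong. You write that $x,y\in V^*$ and ``normality of $V$ in $V^*$ gives $Vx=xV$ and $Vy=yV$, and hence in $M/V$ the products reorder.'' Normality of a value does not imply that the quotient is commutative, any more than normality of a subgroup $N\trianglelefteq G$ implies that $G/N$ is abelian; $Vf=fV$ only says left and right classes coincide, and by itself it gives no control on $V(x\odot y)$ versus $V(y\odot x)$. What the paper actually uses is that the cover quotient $V^*/V$ is \emph{simple} (\cite[Prop 2.3]{DGK}) and that simple pseudo hoops are commutative (\cite[Thm 2.4]{DGK}, a H\"older-type theorem); normality of $V$ in $V^*$ is needed only to make the quotient $V^*/V$ a well-defined pseudo hoop. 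Without invoking that commutativity theorem your argument for (i) does not close.

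For (ii) and (iii) the reduction itself is off. Lemma \ref{bot4} lets you conclude $a^2\odot b^2\le x$ from $V(a\odot b)\le Vx$ for all $V\in\mathrm{Val}(x)$; it does \emph{not} let you conclude $f\le g$ from $Vf\le Vg$ for all values $V$ of $g$ (the order-reflection lemma, Lemma \ref{le:p4}, requires values of \emph{all} elements, in its proof a value of $f\to g$). So your plan to verify $V\bigl((u\squig y)^2\bigr)\le V(w^2\to y)$ only at values of the right-hand side is not backed by any lemma in the paper, and the ``exponent bookkeeping via conjugates'' that you flag as the delicate middle step --- which you do not carry out --- is in fact unnecessary. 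The intended route is to residuate the target to $\bigl((x\to y)^n\squig y\bigr)^2\odot(x\squig y)^{2n}\le y$, set $a:=(x\to y)^n\squig y$, $b:=(x\squig y)^n$, $b':=(x\to y)^n$, and apply Lemma \ref{bot4} with target $y$: for $V\in\mathrm{Val}(y)$ commutativity of $V^*/V$ gives $V((x\to y)^n)=V(((x\vee y)\to y)^n)=V(((x\vee y)\squig y)^n)=V((x\squig y)^n)$, hence $V(a\odot b)=V(b'\odot a)=V\bigl((x\to y)^n\wedge y\bigr)\le Vy$, and Lemma \ref{bot4} yields $a^2\odot b^2\le y$. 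As written, your proof of (ii) and (iii) is both mis-aimed and incomplete at its central step.
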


\begin{proof}
(i)  For arbitrary $a,b \in M,$  let $x:= b\odot a.$  If $V\in
\mathrm{Val}(x),$ then clearly $a,b\geq x$ yields $a,b\in V^*$.
Because $V^*/V$ is simple (see \cite[Prop 2.3]{DGK}), it is
commutative \cite[Thm 2.4]{DGK}. Then $V(a\odot b) = V(b\odot a)=
Vx$. Due to Lemma \ref{bot4}, we obtain $a^2\odot b^2\leq x = b\odot
a$.

(ii), (iii) For all $x,y\in M$ and each $n\in\mathbb N,$ we denote

$$a := (x\to y)^n\squig y, $$
$$b := (x\squig y)^n,\quad  b':= (x\to y)^n.$$

If $y=1$, (ii) and (iii) trivially hold. Let $y<1$ and let us have
$V\in \mathrm{Val} (y)$. Commutativity of the algebra $V^*/V$ and
$y,x\vee y\in V^*$ yield $V((x\to y)^n) = V(((x\vee y)\to y)^n)=
V(((x\vee y)\squig y)^n) = V((x\squig y)^n)$. Consequently, $Vb=Vb'$
and, moreover, $V(a\odot b) = V(b'\odot a)\leq Vy$. Due to Lemma
\ref{bot4}, we obtain $a^2\odot b^2\leq y$ and also $a^2\leq b^2\to
y$. The second part of the theorem can be proved analogously.
\end{proof}

\begin{lemma}\label{bot6}
Let $M$ be a pseudo hoop  satisfying the inequality $x^2\odot
y^2\leq y\odot x$ and let $F$ be and   $a\in M$ be a fixed filter
and an element of $M$, respectively. Then both sets $\{x\geq f\odot
a^n: n\in\mathbb N, f\in F \}$ and $\{x\geq a^n\odot f: n\in\mathbb
N, f\in F \}$ are equal to the filter generated by $F$ and $a$.
\end{lemma}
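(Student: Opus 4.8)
The plan is to show that the set $A := \{x : x \ge f \odot a^n \text{ for some } n \in \mathbb{N}, f \in F\}$ coincides with $F(a) \vee F$, and then the same for the left-handed set $B := \{x : x \ge a^n \odot f\}$. The inclusion $A \subseteq F(a) \vee F$ is immediate, since any $f \odot a^n$ with $f \in F$ is a product of an element of $F$ and an element of $F(a)$, hence lies in $F \vee F(a)$; upward closure finishes it. The same works for $B$. So the real content is the reverse inclusions $F(a) \vee F \subseteq A$ and $F(a) \vee F \subseteq B$, together with checking that $A$ and $B$ are actually filters (upward closure is clear; the issue is closure under $\odot$).

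First I would check that $A$ is a filter. Take $x \ge f_1 \odot a^{n_1}$ and $y \ge f_2 \odot a^{n_2}$ with $f_1, f_2 \in F$. Then $x \odot y \ge f_1 \odot a^{n_1} \odot f_2 \odot a^{n_2}$, and I need to push all the $F$-parts to the left past the powers of $a$. This is exactly where the hypothesis $x^2 \odot y^2 \le y \odot x$ enters: by Proposition~\ref{pr:p5}, this inequality (condition (6.1)) gives $F \odot G = F \vee G = G \odot F$ for all filters, so in particular $a^{n_1} \odot f_2 \in F(a) \vee F = F \vee F(a)$, meaning $a^{n_1} \odot f_2 \ge f_3 \odot a^{m}$ for some $f_3 \in F$, $m \ge 1$ (using that a generic element of $F \vee F(a)$ dominates a single product $f \odot a^k$ — which itself needs the commuting-of-filters fact, or more directly the inequality (6.2) derived from (6.1)). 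Iterating, $x \odot y$ dominates some $f \odot a^n$, so $x \odot y \in A$. The analogous computation shows $B$ is a filter. Since both $A$ and $B$ are filters containing $F$ (take $n$ with $a^n = a^n$, $f$ arbitrary, or rather $x \ge f \odot a^0 = f$) and containing $a$ (take $f = 1$, $n = 1$), each contains $F(a) \vee F$. Combined with the easy inclusions $A, B \subseteq F(a) \vee F$, we get $A = F(a) \vee F = B$.

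The main obstacle will be the bookkeeping in the "push $F$-parts past powers of $a$" step: one must be careful that membership in $F \vee F(a)$ of an element like $a^k \odot f$ really does give a \emph{lower bound} of the form $f' \odot a^m$ with $f' \in F$ and $m$ depending controllably on $k$, rather than just an unstructured product $f_1 \odot a^{j_1} \odot f_2 \odot a^{j_2} \odot \cdots$. The clean way is to invoke (6.2): from $x^2 \odot y^2 \le y \odot x$ one gets, for each $n$, an integer $k_n$ with $(a \odot b)^n \ge a^{k_n} \odot b^{k_n}$ for all $a, b$; applying this with suitable substitutions lets one collapse an alternating product $f_1 \odot a^{j_1} \odot \cdots \odot f_r \odot a^{j_r}$ below a single $f \odot a^m$ with $f = f_1 \odot \cdots$ reordered appropriately. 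Alternatively, one simply cites Proposition~\ref{pr:p5}(iii) directly: $F \odot F(a) = F \vee F(a) = F(a) \odot F$ already says every element of $F \vee F(a)$ \emph{is} (not merely dominates) a product $f \odot g$ with $f \in F$, $g \in F(a)$, and $g \ge a^m$ for some $m$, so $x \ge f \odot a^m$ follows since $f \odot g \ge f \odot a^m$ by monotonicity of $\odot$ — wait, that needs $g \ge a^m$ to give $f \odot g \ge f \odot a^m$, which is fine. So really the lemma is almost a corollary of Proposition~\ref{pr:p5}(iii) once one observes $F(a) \vee F \supseteq A \supseteq F \odot F(a) = F \vee F(a)$, and symmetrically for $B$; I would present it that way to keep the argument short.
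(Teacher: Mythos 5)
Your proposal is correct, but it takes a different route from the paper's. The paper proves the lemma by a direct, self-contained computation: if $x\ge f_1\odot a^n$ and $y\ge f_2\odot a^m$ with $f_1,f_2\in F$, then one application of the hypothesis (substitute $x:=f_2$, $y:=a^n$ into $x^2\odot y^2\le y\odot x$ to get $a^n\odot f_2\ge f_2^2\odot a^{2n}$) yields $x\odot y\ge (f_1\odot f_2^2)\odot a^{2n+m}$ with $f_1\odot f_2^2\in F$; so the set is closed under $\odot$, is clearly upward closed, contains $F$ and $a$, and sits inside the filter generated by them, which forces equality. You instead invoke Proposition~\ref{pr:p5} ((6.1) implies $F\odot G=F\vee G=G\odot F$) and reduce the lemma to the sandwich $F\vee F(a)=F\odot F(a)\subseteq A\subseteq F\vee F(a)$ (and symmetrically for $B$ via $F(a)\odot F$). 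This is legitimate and, in its final ``sandwich'' form, shorter and cleaner than your first pass at verifying filter-closure by iterated commuting (which you rightly flag as bookkeeping-heavy and which becomes unnecessary once you adopt the sandwich). The trade-off is that Proposition~\ref{pr:p5} is comparatively heavy machinery: its proof of (i)$\Rightarrow$(ii)$\Rightarrow$(iii) rests on the Riesz Decomposition Property (Theorem~\ref{th:3.1}) and the iterated inequality (6.2), whereas the paper's argument uses the single inequality exactly once and nothing else. There is no circularity in your dependence on Proposition~\ref{pr:p5}, and your observation that an element of $F\vee F(a)$ genuinely \emph{equals} (not merely dominates) a product $f\odot g$ with $g\ge a^m$ is precisely the point that makes the sandwich work; so no gap, just a heavier import than the paper needs.
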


\begin{proof}
If $x\geq f_1\odot a^n$  and $y\geq f_2\odot a^m$ are such that
$f_1,f_2\in F$ and $m,n\in \mathbb N$ then $x\odot y\geq f_1\odot
a^n\odot f_2^2\odot a^m\geq f_1\odot f_2\odot a^{2n+m}$. Clearly,
also $f_1\odot f_2^2\in F$ and hence presented sets are filters.
Moreover, the given sets are contained in the filter generated by
$F$ and $a$. This proves the lemma.
\end{proof}

Let us have a  pseudo hoop with inequality $x^2\odot y^2\leq y\odot
x$. If $V$ is a value, then for any $x\in V^*\setminus V,$ we have
$F(V,x)=V^*$ and thus, for any $y\in V^*,$ there are $n\in \mathbb
N$ and $v\in V$ such that $v\odot x^n\leq y$ ($x^n\odot v\leq y,$
respectively). Hence, for any $x\in V\setminus V^*$ and any $y\in
V^*,$ there is $n\in \mathbb N$ such that $V(x^n)\leq Vy$ (or
$(x^n)V\leq yV$).

\begin{theorem}\label{th:bot6}
If a basic pseudo hoop $M$ satisfies inequalities
{\rm (i)--(iii)} from Theorem {\rm\ref{th:bot5}}, then $M$ is
normal-valued.
\end{theorem}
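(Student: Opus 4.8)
The plan is to fix an arbitrary value $V$ of $M$, say a value of an element $g<1$, and to prove that $V$ is normal in its cover $V^{*}=F(V,g)$; since the value is arbitrary, this is exactly what it means for $M$ to be normal-valued. I would reduce normality of $V$ in $V^{*}$ to the single equivalence
\[
(\star)\qquad u\to w\in V\iff u\squig w\in V\qquad(u,w\in V^{*}).
\]
The reduction is routine: if $f\in V^{*}$ and $z\in Vf$, then $f\to z\in V$ forces $z\ge(f\to z)\odot f=f\wedge z\in V^{*}$, so $z\in V^{*}$, and applying $(\star)$ to the pairs $(f,z)$ and $(z,f)$ turns the conditions $f\to z,\ z\to f\in V$ defining $Vf$ into $f\squig z,\ z\squig f\in V$, i.e.\ $z\in fV$; hence $Vf\subseteq fV$, and symmetrically $fV\subseteq Vf$, so $Vf=fV$ for every $f\in V^{*}$.

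Before proving $(\star)$ I would record the description of $V^{*}$ forced by hypothesis (i) of Theorem~\ref{th:bot5}. Since (i) is the inequality $x^{2}\odot y^{2}\le y\odot x$, Lemma~\ref{bot6} applies, and for every $z\in V^{*}\setminus V$ the filter $F(V,z)$ properly contains $V$, hence contains $g$ by maximality of $V$, hence equals $V^{*}$; therefore
\[
V^{*}=\{x:\ x\ge c\odot z^{n}\ \text{for some }c\in V,\ n\ge1\}=\{x:\ x\ge z^{n}\odot c\ \text{for some }c\in V,\ n\ge1\}.
\]
In particular, if $w\in V^{*}\setminus V$ there are $n\ge1$ and $c\in V$ with $c\odot z^{n}\le w$, i.e.\ $z^{n}\to w\in V$, and likewise there are $n\ge1$, $c\in V$ with $z^{n}\odot c\le w$, i.e.\ $z^{n}\squig w\in V$.

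To prove $(\star)$ I would establish the implication $u\to w\in V\Rightarrow u\squig w\in V$, the converse following by the symmetric argument with $\to$ and $\squig$, and inequalities (iii) and (ii) of Theorem~\ref{th:bot5}, interchanged. If $w\in V$ the implication is clear since $u\squig w\ge w$; if $w\notin V$ but $u\in V$, then $(u\to w)\odot u=u\odot(u\squig w)$ lies in $V$ by axiom (v), forcing $u\squig w\in V$. So assume $u,w\notin V$ and, for contradiction, that $b:=u\squig w\notin V$; since $b\ge w$ we get $b\in V^{*}\setminus V$, so by the preceding paragraph (with $z=b$) there are $n\ge1$ and $c\in V$ with $c\odot b^{n}\le w$, whence $b^{n}\to w\in V$. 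Instantiating inequality (iii) at $x=u$, $y=w$ and this $n$ gives
\[
\bigl((u\squig w)^{n}\to w\bigr)^{2}\le(u\to w)^{2n}\squig w,
\]
whose left-hand side is $(b^{n}\to w)\odot(b^{n}\to w)\in V$. On the other hand $a:=u\to w\in V$, so $a^{2n}\in V$, hence the left class $a^{2n}V$ equals $V$ whereas $wV<V$ (because $w\notin V$); therefore $a^{2n}\squig w\notin V$. But then the displayed inequality, together with the fact that $V$ is an up-set, forces $a^{2n}\squig w\in V$ --- a contradiction. So $u\squig w\in V$, and $(\star)$ is proved.

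Conceptually the hard point is that inequalities (ii) and (iii) run the ``wrong way'' for a filter (an up-set): membership of the larger side gives no information about the smaller side. The whole argument is organized so that the smaller side is driven into $V$, by extracting the exponent $n$ from the generation statement Lemma~\ref{bot6}, while the larger side is kept out of $V$ because it has the shape (element of $V$)$\,\squig\,$(element not in $V$); upward-closedness of $V$ then yields the contradiction at no cost. The remaining points --- matching each direction of $(\star)$ with the correct inequality, and the two degenerate cases $w\in V$ and $u\in V$ --- are routine.
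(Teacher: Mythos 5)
Your proof is correct and follows essentially the same route as the paper's: hypothesis (i) together with Lemma~\ref{bot6} shows that $V^*$ is generated over $V$ by the powers of any element of $V^*\setminus V$ (supplying the exponent $n$), and inequalities (ii)--(iii) then transfer membership in $V$ between $x\to y$ and $x\rightsquigarrow y$, yielding normality of $V$ in $V^*$. The differences are cosmetic: you make explicit the reduction of $Vf=fV$ to the equivalence $(\star)$ and run the generation argument on $u\rightsquigarrow w$ inside a proof by contradiction, whereas the paper argues the contrapositive with $x\to y$ as the generating element.
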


\begin{proof}
Let (i)--(iii) hold and let $V$ be a value. Let  $x,y\in V^*$ be such
that $x\to y\not\in V$ (and hence $y\not\in V$). Then there is
$n\in\mathbb N$ such that $V(x\to y)^n\leq Vy$. Hence, $(x\to
y)^n\to y\in V$ and also $((x\to y)^n\to y)^2\in V$. Due to
inequality (ii),  $(x\squig y)^{2n}\to y\in V$ holds. Hence, we
assert $x\squig y \notin V.$ If not, $x\squig y\in V$ yields $y\geq
((x\squig y)^{2n}\to y)\odot (x\squig y)^{2n}\in V$ which is a
contradiction. Altogether $x\to y\not\in V$ yields $x\squig y\not\in
V.$

The converse implication $x\squig  y\not\in V$ yields $x\to y\not\in
V$ can be proved in an analogous way. This implies $M$ is normal-valued.
\end{proof}

Combining the results of Theorem \ref{th:bot6} and Theorem
\ref{th:bot5}, we have the following corollary.

\begin{corollary}\label{bot:cor}
Let $M$ be a basic pseudo hoop. The following
statements are equivalent

\begin{enumerate}
\item[{\rm (i)}] $M$ is  normal-valued.

\item[{\rm (ii)}] {\rm (i)--(iii)} from Theorem {\rm\ref{th:bot5}} hold.
\end{enumerate}

\end{corollary}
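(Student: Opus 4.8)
The plan is to read the equivalence off directly from the two theorems immediately preceding it, since together they bracket the claim from both sides and leave nothing further to check.

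For the implication (i) $\Rightarrow$ (ii) I would simply invoke Theorem \ref{th:bot5}: its hypothesis is that $M$ is a normal-valued basic pseudo hoop, and its conclusion is exactly the conjunction of the inequality $x^2\odot y^2\le y\odot x$ with the two families $((x\to y)^n\squig y)^2\le (x\squig y)^{2n}\to y$ and $((x\squig y)^n\to y)^2\le (x\to y)^{2n}\squig y$, $n\in\mathbb N$, i.e.\ conditions (i)--(iii). For the converse (ii) $\Rightarrow$ (i) I would invoke Theorem \ref{th:bot6}, whose hypothesis is that the basic pseudo hoop $M$ satisfies (i)--(iii) of Theorem \ref{th:bot5} and whose conclusion is that $M$ is normal-valued. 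Thus the hypotheses and conclusions of the two theorems match the two implications of the corollary verbatim, and the proof consists of this observation together with an appeal to Theorem \ref{th:bot5} and Theorem \ref{th:bot6}.

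It is worth adding a remark connecting this to the paper's stated goal: since in any pseudo hoop an inequality $s\le t$ is equivalent to the identity $s\to t=1$ (equivalently $s\squig t=1$), the conditions (i)--(iii) amount to a countable family of identities indexed by $n\in\mathbb N$; hence Corollary \ref{bot:cor} provides the promised countable equational base for the variety of normal-valued basic pseudo hoops. The only genuine difficulty lay in establishing the two theorems themselves — in particular the reductions via Lemma \ref{bot4}, Remark \ref{bot3.4}, and the simplicity and consequent commutativity of the quotients $V^*/V$ — and those have already been carried out; the corollary presents no further obstacle.
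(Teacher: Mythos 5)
Your proposal is correct and matches the paper's own argument exactly: the paper derives the corollary by combining Theorem \ref{th:bot5} (giving (i) $\Rightarrow$ (ii)) with Theorem \ref{th:bot6} (giving (ii) $\Rightarrow$ (i)). Your additional remark that the inequalities can be rewritten as identities, yielding the promised countable equational base, is a correct and apt observation, though not part of the paper's stated proof.
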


\vspace{2mm}

\begin{lemma}\label{bot7}
If a basic pseudo hoop $M$ satisfies the inequality $x^2\odot
y^2\leq y\odot x,$ then any value $V$ and any $x\in V^*\setminus V$
such that $Vx>V(x^2)$ satisfy $Vx\subseteq xV$.
\end{lemma}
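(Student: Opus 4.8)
The plan is to show that for the given value $V$ and element $x \in V^*\setminus V$ with $Vx > V(x^2)$, every element of the left coset $xV$ dominates (in the quotient sense) the right coset $Vx$; that is, given $f \in V$ we must produce $g \in V$ with $x\odot f \ge g\odot x$ up to a factor in $V$, or equivalently argue at the level of the quotient classes. The natural first move is to translate the containment $Vx \subseteq xV$ into the concrete statement: for each $v \in V$ there exists $w \in V$ such that $w\odot x \le x\odot v$, since $Vx \subseteq xV$ means every right-class representative $v\odot x$ with $v\in V$ lies in some left class $xV$, and the left classes are linearly ordered (Proposition~\ref{pr:p2}, since $V$ is prime). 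So I would fix $v \in V$ and try to build the required $w$.

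The key tool is the remark immediately preceding Theorem~\ref{th:bot6}: since $M$ satisfies $x^2\odot y^2 \le y\odot x$ and $x \in V^*\setminus V$, we have $F(V,x) = V^*$, hence $x$ together with $V$ generates $V^*$ from both sides, and for every $y \in V^*$ there are $n \in \mathbb N$ and $v_0 \in V$ with $v_0\odot x^n \le y$, and symmetrically $x^n\odot v_1 \le y$. I would apply this to a cleverly chosen element of $V^*$ — the obvious candidate being $x\odot v$ itself (which lies in $V^*$ since $x \le x\odot v$ fails in general, but $x\odot v \ge x^2$ up to the bound, so $x\odot v \in V^*$). This gives an $n$ and a $u \in V$ with $u\odot x^n \le x\odot v$. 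Now the hypothesis $Vx > V(x^2)$ is what forces $n$ effectively down to $1$: in the quotient ordering on $\{V a : a\in M\}$, the strict inequality $Vx > V(x^2)$ means $x^2$ is genuinely below $x$, so $V(x^n)$ is a strictly decreasing chain, and I expect to use this together with $u\odot x^n \le x\odot v$ to extract $w\odot x \le x\odot v$ for a suitable $w \in V$ — the point being that one can "absorb" the extra powers $x^{n-1}$ into the $V$-part precisely because dropping from $x^n$ to $x$ only makes the left side larger, and the strict gap guarantees we do not overshoot past $x\odot v$'s class. Here Lemma~\ref{bot6} is the right bookkeeping device: it says the filter generated by $V$ and $x$ is $\{z \ge v'\odot x^n\} = \{z \ge x^n \odot v'\}$, so the one-sided and two-sided generated filters coincide, letting me freely move between $v'\odot x^n$ and $x^n\odot v'$ representations.

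The main obstacle I anticipate is the direction-of-multiplication juggling combined with pinning down exactly where $Vx > V(x^2)$ enters: the inequality $x^2\odot y^2 \le y\odot x$ is not symmetric, so swapping the order of a product costs squaring, and one must check that these squarings stay inside $V$ (which they do, since $V$ is a filter and the elements in question are already in $V$) without inflating the $x$-power beyond what the strict gap permits. Concretely, the delicate step is converting "$u\odot x^n \le x\odot v$ for some $n$" into "$w\odot x \le x\odot v$ for some $w\in V$": if $n \ge 2$ one would like to say $x^n = x\odot x^{n-1} \ge \text{(something)}$, but that goes the wrong way, so instead I would run the conjugation $\lambda_x$ or $\rho_x$ on the relation and use $\lambda_f(a\odot b)\le \lambda_f(a)\odot\lambda_f(b)$ from \cite{BlTs} to push $x$ across, then invoke $Vx > V(x^2)$ to collapse the resulting iterated-conjugate tower. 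I would budget most of the write-up for verifying this collapse rigorously; the rest is routine coset arithmetic using that $V$ is prime and normal considerations are not yet needed (this lemma is a stepping stone toward normality, not a consequence of it).
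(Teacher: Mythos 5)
Your proposal has two genuine gaps. First, the reduction of $Vx\subseteq xV$ to ``for each $v\in V$ there exists $w\in V$ with $w\odot x\le x\odot v$'' is not a correct translation of the statement. The class $Vx$ is the full equivalence class $\{y:\ x\to y,\ y\to x\in V\}$, not the set $V\odot x=\{v\odot x:\ v\in V\}$, so even after treating the elements $v\odot x$ you must still handle an arbitrary $y\cong_V x$; moreover the concrete inequality you wrote has the factors on the wrong sides: $v\odot x\in xV$ means $x\squig(v\odot x)\in V$, i.e.\ $x\odot w\le v\odot x$ for some $w\in V$, which is the condition $\lambda_x(v)\in V$, whereas your condition $w\odot x\le x\odot v$ amounts to $\rho_x(v)\in V$ and is the one relevant to the \emph{opposite} containment $xV\subseteq Vx$. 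Since noncommutativity is the entire point of the lemma, this is not cosmetic. Second, the step you yourself flag as delicate --- passing from $u\odot x^n\le x\odot v$ to a statement with a single factor of $x$ --- is exactly the hard part, and the mechanism you sketch (iterating $\lambda_f(a\odot b)\le\lambda_f(a)\odot\lambda_f(b)$ and ``collapsing'' via $Vx>Vx^2$) is neither carried out nor obviously terminating; as written the proposal is a plan, not a proof.

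For comparison, the paper's argument runs through the conjugates directly. Divisibility gives $f\odot x=x\odot\lambda_x(f)$ and more generally $f^n\odot x=x\odot(\lambda_x(f))^n$. If some $f\in V$ had $\lambda_x(f)\notin V$, then $V((\lambda_x(f))^n)\ge V(x\odot(\lambda_x(f))^n)=V(f^n\odot x)=Vx$ for every $n$, while the remark preceding Theorem~\ref{th:bot6} (which is where $x^2\odot y^2\le y\odot x$ and $\lambda_x(f)\in V^*\setminus V$ are used) supplies an $n$ with $V((\lambda_x(f))^n)\le V(x^2)$; together these force $Vx\le V(x^2)$, contradicting the hypothesis. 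Hence $\lambda_x(V)\subseteq V$, and likewise $\lambda_y(V)\subseteq V$ for each $y\in Vx$ after checking that $Vy>V(y^2)$ as well. Then $y\in Vx$ gives $f_1\odot x=f_2\odot y$ with $f_1,f_2\in V$, i.e.\ $x\odot\lambda_x(f_1)=y\odot\lambda_y(f_2)$ with both conjugates in $V$, whence $y\in xV$. If you want to salvage your route, the statement to aim for is precisely $\lambda_x(V)\subseteq V$ (and its analogue for every $y\in Vx$), and the strict inequality $Vx>V(x^2)$ enters through a contradiction argument rather than through a collapse of powers.
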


\begin{proof}
Let us have $x\in V^*$ such that $Vx>Vx^2$ and, moreover, let $f\in
V$ be such that $\lambda_x(f)=x\squig (f\odot x)\not\in V$. The
divisibility clearly yields the equality $
x\odot(\lambda_x(f))^n=f^n\odot x$ for any $n\in\mathbb N$. Because
$f\in V$, we can interpret the last equality as
$V((\lambda_x(f))^n)\geq V(x\odot(\lambda_x(f))^n) = V(f^n\odot x)
=Vx$. Hence, $\lambda_x(f)\in V^*\setminus V$ and $x\in V^*.$ There
is $n\in\mathbb N$ such that $Vx^2\geq V((\lambda_x(f))^n)$ and
altogether $Vx^2< Vx$ which is  a contradiction. We have proved that
for any $f\in V$ also $\lambda_x(f)\in V$.

One can easily check that for any $y\in Vx,$ the inequality
$Vy^2<Vy$ holds (more precisely, if $Vy^2=Vy,$ then $Vy$ is a least
element and also $Vx=Vy$ is minimal which gives a contradiction
$Vx=Vx^2$). Due to $y\in Vx,$ we obtain the equality $f_1\odot
x=f_2\odot y$ and thus also $x\odot \lambda_x(f_1)=y\odot
\lambda_y(f_2)$. In the previous part, we have proved that
$\lambda_x(f_1),\lambda_y(f_2)\in V$ and thus $y\in xV$.
\end{proof}

{\bf Question 2.}  Does inequality $x^2\odot y^2\leq y\odot x$
characterize the class of (basic) normal-valued pseudo hoops~?  For
example, let $G$ be an $\ell$-group and let $G^\dag$ be the kite
corresponding to $G$ (for kites see \cite{JiMo, DGK}).  By \cite[Lem
4.11]{DGK}, the kite $G^\dag$ is a normal-valued pseudo BL-algebra
iff $G$ is a normal-valued $\ell$-group. Hence, inequality (6.1)
completely characterizes a kite to be normal-valued.

In what follows, we present a variety of basic pseudo hoops
satisfying a single equation such that the inequality $x^2\odot y^2
\le y\odot x$ is a necessary and sufficient condition for  $M$ to be
normal-valued.

We say that a bounded pseudo hoop $M$ is \textit{good} if
$$
x^{-\sim}=x^{\sim-}, \quad x \in M, \eqno(6.3)
$$
where $x^-:=x\to 0$ and $x^\sim=x\squig 0.$  For example, every
pseudo MV-algebra is good as well as every representable pseudo hoop
is good, see \cite{Dvu4}. On the other hand, a kite $G^\dag$ is a
pseudo BL-algebra which is not good whenever $G\ne \{e\},$ \cite[Lem
4.11]{DGK}.

We present a stronger equality than (6.3):
$$
(x\to y)\squig y = (x\squig y)\to y \eqno(6.4)
$$
for all $x,y \in M.$

For example, every negative cone of an $\ell$-group and the negative interval of an $\ell$-group with strong unit satisfies (6.4).
If  $M$ is a linearly ordered pseudo hoop, due to
\cite[Cor 4.2]{Dvu4}, $M$ is an ordinal sum of a system whose each
component is either the negative cone of a linearly ordered
$\ell$-group or the negative interval of a linearly ordered
$\ell$-group with strong unit.  Therefore, it satisfies (6.4),
consequently every representable  bounded pseudo hoop satisfies (6.4). On the
other side, no nontrivial kite satisfies (6.4).

\begin{lemma}\label{tom1}  Let $M$ be a basic pseudo hoop satisfying
{\rm (6.4)}. Then $M$ satisfies the identity
$$ (x\to y)^n \squig y= (x\squig y)^n \to y \eqno(6.5)
$$
for all $x,y \in M$ and for any $n \in \mathbb N.$
\end{lemma}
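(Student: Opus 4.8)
The plan is to prove \textup{(6.5)} by induction on $n$, the base case $n=1$ being exactly the hypothesis \textup{(6.4)}. For the inductive step I would assume \textup{(6.5)} holds for a given $n$ and all $x,y$, fix $x,y\in M$, peel one copy of $x\to y$ off the front of the power, and use associativity of $\odot$ together with axiom (iv) to obtain
$$
(x\to y)^{n+1}\squig y=\bigl((x\to y)\odot(x\to y)^{n}\bigr)\squig y=(x\to y)^{n}\squig\bigl((x\to y)\squig y\bigr).
$$
Then \textup{(6.4)} rewrites the inner term, yielding $(x\to y)^{n}\squig\bigl((x\squig y)\to y\bigr)$.

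The decisive move is to apply the mixed associativity law $a\squig(b\to c)=b\to(a\squig c)$, which holds in every pseudo hoop since pseudo hoops are residuated (it is the familiar residuated-lattice identity $x\backslash(y/z)=(x\backslash y)/z$ written in the present signature). Taking $a=(x\to y)^{n}$, $b=x\squig y$, $c=y$ transforms the expression into $(x\squig y)\to\bigl((x\to y)^{n}\squig y\bigr)$. Now the induction hypothesis replaces $(x\to y)^{n}\squig y$ by $(x\squig y)^{n}\to y$, and axiom (iii) contracts the outcome:
$$
(x\squig y)\to\bigl((x\squig y)^{n}\to y\bigr)=\bigl((x\squig y)\odot(x\squig y)^{n}\bigr)\to y=(x\squig y)^{n+1}\to y,
$$
which is \textup{(6.5)} for $n+1$.

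I do not expect a genuine obstacle: the whole argument is a short induction, and the only step that deserves a word of justification is the mixed associativity identity, which I would either quote from the basic arithmetic of pseudo hoops in \cite{GLP} or verify in one line from residuation, noting that both $a\squig(b\to c)$ and $b\to(a\squig c)$ equal the largest $z$ with $a\odot z\odot b\le c$. It is worth remarking that nothing beyond \textup{(6.4)} and the residuated content of the pseudo hoop axioms is used --- neither distributivity, nor the lattice operations, nor basic-ness of $M$ --- so the identity \textup{(6.5)} in fact holds in every pseudo hoop satisfying \textup{(6.4)}.
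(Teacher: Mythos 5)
Your induction is exactly the paper's proof: peel one factor via axiom (iv), rewrite the inner term with (6.4), apply the mixed associativity law $a\squig(b\to c)=b\to(a\squig c)$ (which the paper quotes from Blount--Tsinakis and you correctly verify by residuation), then use the induction hypothesis and axiom (iii). The argument is correct, and your closing observation that basicness is not actually used is also accurate.
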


\begin{proof}
Assume for induction that (6.5) holds for any integer $k$ with $1\le
k\le n.$ We have
\begin{align*}
(x\to y)^{n+1} \squig y &= (x\to y)^n \squig ((x\to y)\squig y)\\
&= (x\to y)^n \squig ((x\squig y)\to y)\\
&= (x \squig y) \to ((x\to y)^n \squig y)\\
&= (x\squig y) \to ((x\squig y)^n \to y\\
&= (x\squig y)^{n+1}\to y,
\end{align*}
where in the third equality we have used the identity $a \squig (b
\to c)= b \to (a\squig c),$ $a,b,c \in M,$ see \cite[Lem
3.2(6)]{BlTs}.
\end{proof}

\begin{theorem}\label{tom2}
Let $M$ be a basic pseudo hoop satisfying {\rm (6.4)}.  Then $M$ is
normal-valued if and only if $x^2 \odot y^2 \le y\odot x$ for all
$x,y \in M.$
\end{theorem}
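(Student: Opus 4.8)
The plan is to reduce the statement to the characterisation of normal-valuedness furnished by Corollary \ref{bot:cor} (the three inequalities (i)--(iii) of Theorem \ref{th:bot5}), and then to notice that, under hypothesis (6.4), the inequalities (ii) and (iii) hold automatically via Lemma \ref{tom1}, so that the whole content sits in condition (i).

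The forward implication is immediate and does not even use (6.4): if $M$ is normal-valued, then Theorem \ref{th:bot5}(i) applies verbatim and gives $x^2\odot y^2\le y\odot x$ for all $x,y\in M$. For the converse, I would assume $x^2\odot y^2\le y\odot x$ and, invoking Corollary \ref{bot:cor}, reduce the goal to verifying (i)--(iii) of Theorem \ref{th:bot5}; condition (i) is the hypothesis, so only (ii) and (iii) remain, and these I would establish in \emph{every} basic pseudo hoop satisfying (6.4). Fix $x,y\in M$ and $n\in\mathbb N$ and put $a:=x\to y$. Since every element of a pseudo hoop lies below $1$, monotonicity of $\odot$ gives $(a^n\squig y)^2\le a^n\squig y$; and since $a^{2n}\le a^n$, the order-reversing behaviour of $\squig$ in its first argument gives $a^n\squig y\le a^{2n}\squig y$. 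Feeding the exponent $2n$ into Lemma \ref{tom1}, i.e. $(x\to y)^{2n}\squig y=(x\squig y)^{2n}\to y$, one obtains
$$\bigl((x\to y)^n\squig y\bigr)^2\ \le\ (x\to y)^n\squig y\ \le\ (x\to y)^{2n}\squig y\ =\ (x\squig y)^{2n}\to y,$$
which is exactly (ii). Interchanging the roles of $\to$ and $\squig$ throughout — taking $x\squig y$ in place of $a$, using that $\to$ is order-reversing in its first argument, and applying Lemma \ref{tom1} again — yields (iii) in the identical fashion. Hence $M$ satisfies (i)--(iii), so $M$ is normal-valued by Corollary \ref{bot:cor}.

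I do not expect a genuine obstacle here: once Lemma \ref{tom1} has been proved, conditions (ii) and (iii) of Theorem \ref{th:bot5} become essentially formal consequences of $c^2\le c$ (every element is below $1$) and of the monotonicity properties of the residuals. The only points that require a little care are bookkeeping ones — tracking which argument is order-preserving and which is order-reversing, and remembering to apply Lemma \ref{tom1} at the exponent $2n$ rather than $n$ — and the conceptual observation that, for basic pseudo hoops satisfying (6.4), the two ``mixed'' inequalities (ii)--(iii) carry no extra information, so that normal-valuedness is governed solely by (6.1).
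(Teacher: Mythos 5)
Your proposal is correct and follows essentially the same route as the paper: the forward direction is Theorem~\ref{th:bot5}(i) verbatim, and the converse reduces via Theorem~\ref{th:bot6} (equivalently Corollary~\ref{bot:cor}) to verifying inequalities (ii)--(iii), which both you and the paper obtain from Lemma~\ref{tom1} without ever using hypothesis (6.1). The only difference is in the final verification: the paper residuates (ii) to $((x\to y)^n\squig y)^2\odot(x\squig y)^{2n}\le y$ and multiplies out using Lemma~\ref{tom1} at exponent $n$, whereas you get it directly from $c^2\le c$, antitonicity of $\squig$ in its first argument, and Lemma~\ref{tom1} at exponent $2n$ --- an equally valid and arguably cleaner computation.
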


\begin{proof}
The ``if" condition holds by Theorem \ref{th:bot5}, so only need to
show that if $M$ satisfies $x^2 \odot y^2 \le y\odot x,$ then $M$ is
normal-valued. Assume $M$ satisfies $x^2 \odot y^2 \le y\odot x.$ By
Theorem \ref{th:bot6}, it suffices to show that $M$ satisfies
$$
((x\rightarrow y)^n\squig y)^2\leq (x\squig y)^{2n}\to y
$$
and
$$
((x\squig y)^n\to y)^2\leq (x\to y)^{2n}\squig y
$$
for any $n\in \mathbb N$.  By Lemma \ref{tom1}, it is enough to show
that the first inequality of the latter two hods, which is, by
residuation, equivalent to

$$
((x\rightarrow y)^n\squig y)^2\odot (x\squig y)^{2n}\le y.
$$
Now, consider
\begin{align*}
((x\rightarrow y)^n\squig y)^2\odot (x\squig y)^{2n} &=
((x\rightarrow y)^n\squig y)\odot ((x\rightarrow y)^n\squig y)\odot
(x\squig y)^{2n}\\
&= ((x\rightarrow y)^n\squig y)\odot ((x\squig y)^n\to y)\odot
(x\squig y)^{2n} \\
&\le ((x\rightarrow y)^n\squig y)\odot y \odot (x\squig y)^{n}\\
&\le y
\end{align*}
showing that the desired inequality holds.
\end{proof}

The following statement was proved in \cite[Thm 3.2]{DGK}
in a different way, here we use Theorem \ref{tom2}.

\begin{corollary}\label{tom3}
Every representable pseudo hoop is normal-valued.
\end{corollary}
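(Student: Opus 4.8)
The goal is to show that every representable pseudo hoop is normal-valued, and we are told to deduce this from Theorem \ref{tom2}.

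My plan is as follows. First, observe that by Theorem \ref{tom2}, a basic pseudo hoop satisfying (6.4) is normal-valued precisely when it satisfies $x^2\odot y^2\le y\odot x$. So the strategy is to verify, for representable pseudo hoops, both hypotheses of that theorem: that a representable pseudo hoop is basic, that it satisfies (6.4), and that it satisfies $x^2\odot y^2\le y\odot x$. Since all three of these are (in)equalities/quasi-identities preserved under subdirect products, it suffices to check them in each linearly ordered factor. The fact that a linearly ordered pseudo hoop is basic was already noted in Section 3, so representable pseudo hoops are basic.

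Next, for a linearly ordered pseudo hoop $L$, invoke the Agliano--Montagna decomposition \cite[Cor 4.2]{Dvu4}: $L$ is an ordinal sum of components, each of which is either the negative cone of a linearly ordered $\ell$-group or the negative interval $[-u,e]$ of a linearly ordered $\ell$-group with strong unit. As remarked in the text just before Lemma \ref{tom1}, each such component satisfies (6.4), hence so does the ordinal sum $L$; thus every representable pseudo hoop satisfies (6.4). For the inequality $x^2\odot y^2\le y\odot x$: it is trivially true when $x\le y$ or $y\le x$ in a way that makes one side collapse, but more to the point, in a linearly ordered $\ell$-group $G$ one has $G^-$ commutative-modulo-order in the sense that Wolfenstein's criterion $b^2a^2\le ab$ holds for all $a,b\in G^-$ because linearly ordered $\ell$-groups are abelian --- actually one does not even need that; linearly ordered groups satisfy $b^2a^2\le ab$ on the negative cone since, as the text notes, every representable pseudo hoop satisfies (6.1), and linearly ordered pseudo hoops are in particular representable. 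So $x^2\odot y^2\le y\odot x$ holds in every linearly ordered pseudo hoop, hence in every representable one.

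Having verified all three hypotheses, Theorem \ref{tom2} applies directly and yields that every representable pseudo hoop is normal-valued. The one point deserving care --- the main (minor) obstacle --- is the bookkeeping about ordinal sums: one must check that (6.4), which is a universally quantified identity, transfers from the components of an ordinal sum to the whole sum; this is routine because in an ordinal sum the operations $\odot$, $\to$, $\squig$ restricted to any pair of elements are computed within a single component (or degenerate because $1$ absorbs), so any identity in the language holding in all components holds in the sum. With that observation in place the proof is a two-line appeal to Theorem \ref{tom2}. I would write it as:

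\begin{proof}
A representable pseudo hoop is a subdirect product of linearly ordered pseudo hoops, and both being basic and satisfying an inequality or identity are inherited from subdirect factors, so it is enough to prove the claim for a linearly ordered pseudo hoop $L$. As noted in Section 3, $L$ is basic. By \cite[Cor 4.2]{Dvu4}, $L$ is an ordinal sum of components each of which is the negative cone of a linearly ordered $\ell$-group or the negative interval of a linearly ordered $\ell$-group with strong unit; each such component satisfies {\rm (6.4)}, and since in an ordinal sum $\odot,\to,\squig$ applied to any elements are evaluated inside a single component, {\rm (6.4)} holds in $L$. Finally, every linearly ordered pseudo hoop satisfies {\rm (6.1)}, hence also $x^2\odot y^2\le y\odot x$. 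By Theorem \ref{tom2}, $L$ is normal-valued.
\end{proof}
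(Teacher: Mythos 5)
Your proposal is correct and follows essentially the same route as the paper: reduce to the linearly ordered case, use the Aglian\`o--Montagna decomposition to get (6.4), use normal-valuedness of linearly ordered $\ell$-groups (Wolfenstein) to get $x^2\odot y^2\le y\odot x$ on each component and hence on the ordinal sum, and conclude by Theorem \ref{tom2}. You are merely more explicit than the paper about the passage from linearly ordered factors to subdirect products and about identities transferring across ordinal sums, which is a welcome clarification rather than a deviation.
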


\begin{proof}  If $M$ is a linearly ordered pseudo hoop, then
according to the remark just after (6.4), $M$ satisfies $(6.4).$
Every linearly ordered $\ell$-group is normal-valued \cite{Dar}, so
is its negative cone as well as  its negative interval  with strong
unit satisfies the inequality $x^2\odot x^2\le y \odot x.$
Consequently, every ordinal sum of such linear components satisfies
the inequality which  by Theorem \ref{tom2} entails, $M$ is
normal-valued.
\end{proof}


\begin{thebibliography}{DvRa2}



\bibitem[AgMo]{AgMo} P. Aglian\`o and F. Montagna, \textit{Varieties of
BL-algebras I: general properties,} J. Pure Appl. Algebra {\bf 181}
(2003), 105--129.

\bibitem[BlTs]{BlTs} K. Blount, C. Tsinakis, \textit{The structure
of residuated lattices,}  Inter. J. Algebra Comput. {\bf 13} (2003),
437--461.


\bibitem[Bos1]{Bos1}
B. Bosbach,  \textit{Komplement\"are Halbgruppen. Axiomatik und
Arithmetik,} Fund. Math. \textbf{64} (1966), 257--287.

\bibitem[Bos2]{Bos2}
B. Bosbach, \textit{Komplement\"are Halbgruppen. Kongruenzen and
Quotienten,} Fund. Math. \textbf{69} (1970), 1--14.

\bibitem[Bos3]{Bos3}
B. Bosbach,   \textit{Residuation groupoids - again} Results in
Math. {\bf 53}  (2009), 27--51.

\bibitem[Bos4]{Bos4}
B. Bosbach,   \textit{Divisibility groupoids - again,} Results in
Math. {\bf 57}  (2010), 257--285.

\bibitem[Cha]{Cha}
C.C. Chang, \textit{Algebraic analysis of many valued logics,}
Trans. Amer. Math. Soc. {\bf 88} (1958), 467--490.


\bibitem[Dar]{Dar} M.R. Darnel, \textit{``Theory of
Lattice-Ordered Groups"}, Marcel Dekker, Inc., New York, 1995.

\bibitem[DGI1]{DGI1}
A. Di Nola, G. Georgescu, and A. Iorgulescu, \textit{Pseudo-BL
algebras I,} Multiple Val. Logic {\bf 8} (2002), 673--714.


\bibitem[DGI2]{DGI2}
A. Di Nola, G. Georgescu, and A. Iorgulescu, \textit{Pseudo-BL
algebras II,} Multiple Val. Logic {\bf 8} (2002), 715--750.

\bibitem[Dvu1]{Dvu1}
A. Dvure\v censkij, \textit{Pseudo MV-algebras are intervals in
$\ell $-groups,} J. Austral. Math. Soc. {\bf 70} (2002), 427--445.


\bibitem[Dvu2]{Dvu2}
A. Dvure\v{c}enskij, \textit{States on pseudo MV-algebras,} Studia
Logica \textbf{68} (2001), 301--327.

\bibitem[Dvu3]{Dvu3}
A. Dvure\v censkij, \textit{Every linear pseudo BL-algebra admits a
state,} Soft Computing {\bf 11} (2007), 495--501.


\bibitem[Dvu4]{Dvu4}
A. Dvure\v{c}enskij, \textit{Aglian\`o--Montagna type decomposition
of linear pseudo hoops and its applications,} J. Pure Appl. Algebra
{\bf 211} (2007), 851--861.

\bibitem[Dvu5]{Dvu5}
A. Dvure\v censkij,    \textit{Holland's theorem for pseudo-effect
 algebras,}      Czechoslovak Math. J. {\bf 56} (2006), 47--59.



\bibitem[DGK]{DGK}
A. Dvure\v{c}enskij, R. Giuntini, and T. Kowalski,
\textit{On the structure of pseudo BL-algebras and pseudo hoops in
quantum logics,} Found. Phys. {\bf 40} (2010), 1519--1542.
DOI:10.1007/s10701-009-9342-5

\bibitem[GaTs]{GaTs}
N.  Galatos, C. Tsinakis, \textit{Generalized MV-algebras.} J.
Algebra {\bf 283} (2005), 254--291.


\bibitem[GeIo]{GeIo}
G. Georgescu, A. Iorgulescu, \textit{Pseudo-MV algebras,}
Multi-Valued Logic {\bf 6} (2001), 95--135.


\bibitem[GLP]{GLP}
G. Georgescu, L. Leu\c{s}tean, V. Preoteasa, \textit{Pseudo-hoops,}
J. Mult.-Val. Log. Soft Comput. {\bf 11} (2005), 153--184.

\bibitem[Haj]{Haj}
P. H\'ajek, \textit{``Metamathematics of Fuzzy Logic"}, Trends in
Logic - Studia Logica Library, Volume 4, Kluwer Academic Publishers,
Dordrecht, 1998.

\bibitem[JiMo]{JiMo}
P. Jipsen and  F. Montagna, \textit{On the structure of generalized
BL-algebras,} Algebra Universalis {\bf 55} (2006), 226--237.

\bibitem[Rac]{Rac}
J. Rach\r{u}nek, \textit{A non-commutative generalization of
MV-algebras,} Czechoslovak Math. J. {\bf 52} (2002), 255--273.


\end{thebibliography}
\end{document}